\date{}
\DeclareMathOperator*{\argmin}{arg\,min}
\newcommand{\pf}{\varphi}
\pgfplotsset{compat=newest}
\definecolor{blau}{RGB}{0 144 188}
\definecolor{newblue1}{RGB}{0 144 188}
\definecolor{newblue2}{RGB}{197 216 227}
\definecolor{newgreen1}{RGB}{0 144 118}
\definecolor{newgreen2}{RGB}{197 222 215}
\definecolor{neworange1}{RGB}{255 137 0}
\definecolor{neworange2}{RGB}{255 205 105}
\definecolor{newred1}{RGB}{254 54 41}
\definecolor{newred2}{RGB}{141 26  18}
\definecolor{newpurple1}{RGB}{196 19 252}
\definecolor{newpurple2}{RGB}{93 14 117}
\newtheorem{lemma}{Lemma}
\newtheorem{theorem}{Theorem}
\newtheorem{remark}{Remark}
\newtheorem{prop}{Proposition}
\newtheorem{corollary}{Corollary}
\title{A robust solution strategy for the Cahn-Larch\'e equations}
\author[1]{Erlend Storvik\footnote{Corresponding author: erlend.storvik@uib.no}}
\author[1]{Jakub Wiktor Both}
\author[1]{Jan Martin Nordbotten}
\author[1]{Florin Adrian Radu}
\affil[1]{Center for Modeling of Coupled Subsurface Dynamics, Department of Mathematics, University of Bergen, All\'egaten 44, 5007 Bergen, Norway}
\date{}
\begin{document}

\maketitle

\begin{abstract}
In this paper we propose a solution strategy for the Cahn-Larch\'e equations, which is a model for linearized elasticity in a medium with two elastic phases that evolve subject to a Ginzburg-Landau type energy functional. The system can be seen as a combination of the Cahn-Hilliard regularized interface equation and linearized elasticity, and is non-linearly coupled, has a fourth order term that comes from the Cahn-Hilliard subsystem, and is non-convex and nonlinear in both the phase-field and displacement variables. We propose a novel semi-implicit discretization in time that uses a standard convex-concave splitting method of the nonlinear double-well potential, as well as special treatment to the elastic energy. We show that the resulting discrete system is equivalent to a convex minimization problem, and propose and prove the convergence of alternating minimization applied to it. Finally, we present numerical experiments that show the robustness and effectiveness of both alternating minimization and the monolithic Newton method applied to the newly proposed discrete system of equations. We compare it to a system of equations that has been discretized with a standard convex-concave splitting of the double-well potential, and implicit evaluations of the elasticity contributions and show that the newly proposed discrete system is better conditioned for linearization techniques.
\end{abstract}

\section{Introduction}
The Cahn-Larch\'e system models elastic deformation within a two-phase solid material. Here, the solid phases evolve subject to a Ginzburg-Landau type energy functional, as proposed in the work of Cahn and Hilliard \cite{CH1, CH2}, additively coupled with the elastic energy of the system. The equations are credited to the work of Cahn and Larch\'e \cite{larche1973linear, CL} which considered stress effects related to diffusion in solids. More recently, the equations were studied experimentally and verified in \cite{CLExperiment} as a model for the connection between chemical and mechanical processes in alloys. Additionally the Cahn-Larch\'e system has been applied in relation to tumor modelling \cite{garcketumormechanics, garcke2021sparse, fritz2020subdiffusive}, diffusional corsening in solders \cite{dreyer2001modeling,graser2014numerical}, and to model the process of intercalation of lithium ions into silicon \cite{meca2018sharp}. Moreover, in \cite{luiz2009phase} a phase-field model, closely related to the Cahn-Hilliard equation was proposed to account for unsaturated flow through porous materials. Extensions to a Cahn-Larch\'e setting could be considered to model flow through swelling deformable porous media. 

Over the last two decades there has been extensive research on the well-posedness and analysis of both the continuous and discrete counterparts of Cahn-Larch\'e systems. In \cite{bonetti2002model, garcke2003CHE} existence and uniqueness results are obtained for the weak system of equations, in  \cite{garcketumormechanics} similar results are obtained for the coupling of Cahn-Larch\'e to transport in the context of tumor growth, and on the same model an optimal control problem is analyzed in \cite{garcke2021sparse}. In \cite{graser2014numerical}, existence and uniqueness of a discretized Cahn-Larch\'e system is provided, and in \cite{abels2015sharpCHE, garcke2006asymptotic} the sharp interface limit of the equations is showed to be equivalent to a modified Hele-Shaw system coupled with elasticity. There are several published works on numerical discretization techniques for the system. In \cite{graser2014numerical,feng2009fourier}, adaptive mesh refinement techniques are discussed and \cite{garcke2005CHENumerics,garcke2001cahn} consider spatial discretization with linear finite elements together with the implicit Euler and Crank-Nicholson time discretizations. 

In this work, we propose a novel semi-implicit time-discretization that corresponds to the optimality conditions of a convex minimization problem, and therefore is suitable for nonlinear solvers. The semi-implicit time discretization is related to the unconditionally gradient stable convex-concave splitting method that Eyre proposed in \cite{eyre1998} for the double-well potential of the Cahn-Hilliard equation. Here, that treatment is adopted and applied to the Cahn-Larch\'e equations, in two different settings; when the elasticity tensor is independent of, and dependent on the phase-field. In the former case, the coupling between phase-field and elasticity is linear and by evaluating the terms from the elasticity subsystem implicitly the discrete system of equations is identified with a convex minimization problem, similar to the treatment in \cite{garcke2005CHENumerics}. Furthermore, the system of equations is showed to be unconditionally gradient stable, and that an alternating minimization technique, alternating between solving for phase-field and displacement, applied to the proposed minimization problem converges. In the second case, however, implicit evaluation in time of the terms corresponding to the elasticity subsystem does not lead to a convex minimization problem when the elasticity tensor depends on the phase-field, even with the convex-concave splitting method applied to the double-well potential \cite{eyre1998}. We show through numerical examples that the Newton method fails to converge in several instances in this case and propose a way to carefully evaluate some terms explicitly in time, such that the corresponding minimization problem is convex. This leads to a system that is better conditioned for solution algorithms, and a theoretical proof of convergence for the  alternating minimization method is provided. Moreover, convergence is experienced for the Newton method in all numerical examples.

When solving the coupled discrete system of equations there exists  two common choices: Either, to solve the entire system monolithically, using some linearization procedure, or to apply an iterative decoupling method. A beneficial trait of decoupling methods is the possibility to use readily available solvers for each subsystem. For the discrete system of equations that we present in this paper that corresponds to solving an extended Cahn-Hilliard equation with well-behaving nonlinearities, due to the convex-concave splitting method, and an elasticity equation with heterogeneous elasticity tensor subsequently. For the Cahn-Hilliard subsystem some linearization technique (e.g., Newton's method) is still needed to handle the nonlinearities corresponding to the modified double-well potential and terms that arise from the elasticity contribution. The elasticity subsystem, on the other hand, reduces to a standard elasticity equation with, possibly, heterogeneous elasticity tensor. Any readily available solvers and preconditioners for these subproblems can be applied, and combining the decoupling method with the linearization of the nonlinear Cahn-Hilliard subsystem (doing only one linearization iteration in each decoupling iteration) as discussed in \cite{illiano2021iterative,both2019anderson} is possible as well. Decoupling techniques are often also known as staggered solution strategies, splitting schemes or alternating minimization for symmetric problems with an underlying minimization structure, and have been widely adopted to solve equations related to phase-field modelling of brittle fracture propagation  \cite{gerasimov2016line, farrell2017linear, storvik2021accelerated, brun2020iterative, Wick2020}, and poroelasticity equations where flow and elasticity is coupled \cite{jakubaml, storvik2019optimization, mikelicwheeler, jakubgradientflow}. Moreover, a staggered solution strategy was used to solve finite-strain elasticity coupled with the Cahn-Hilliard equation in \cite{areias}.

Here, we investigate the properties of both monolithical solvers and decoupling methods for the Cahn-Larch\'e equations. Moreover, we properly address the theoretical convergence properties of alternating minimization. To do this we formulate the discretized system of equations as a minimization problem and utilize an abstract convergence result for alternating minimization provided in  \cite{jakubAM}. This framework requires at least convexity of the minimization problem in each variable, and Lipschitz continuity of its gradients. We prove that this holds true for the discretized Cahn-Larch\'e equations and obtain convergence rates that we investigate through numerical examples. Moreover, it can be useful to apply the Anderson acceleration \cite{anderson1965iterative} post-processing technique (as done in e.g., \cite{storvik2021accelerated, both2019anderson}) to enhance the convergence speed of the alternating minimization method. This is particularly useful for staggered solution methods as the Anderson acceleration is known to be accelerating for linearly convergent fixed-point schemes \cite{evans2020proof}.

To summarize, the main contributions of the paper are:
\begin{itemize}
    \item We propose a new, semi-implicit time discretization of the Cahn-Larch\'e equations that leads to a nonlinear system which is suitable for linearization and decoupling methods.
    \item Identification of the proposed discretized equations with a convex minimization problem.
    \item A proof of convergence for alternating minimization as an iterative solver, including convergence rates.
    \item Numerical experiments showing the efficiency of the proposed time-discretization and iterative solver with comparison to monolithic methods and acceleration.
\end{itemize}
Moreover, we stress that the time-discretization and decoupling procedures that we apply here, can be extended and applied to similar models, e.g., the Cahn-Hilliard-Biot model \cite{storvikCHB}, tumor growth models with transport effects \cite{garcketumormechanics}, phase-field models for precipitation and dissolution processes \cite{bringedal2020phase} and the two-phase two fluxes Cahn-Hilliard model \cite{cances2021finite}.

The paper is structured as follows: The mathematical model and assumptions on the model parameters are presented in Section~\ref{sec:model}. In Section~\ref{sec:discretization}, we discuss the discrete problem associated with the Cahn-Larch\'e system both for constant and phase-field-dependent elasticity tensor. Moreover, we show equivalence between the discrete model and a minimization problem, and prove convergence of alternating minimization applied to this problem. In Section~\ref{sec:numerics}, we present several numerical experiments and show the benefits of the proposed discretization and linearization/decoupling method compared to standard choices. Finally, in Section~\ref{sec:conclusion} we make concluding remarks.

\section{The mathematical problem and assumptions on model parameters}\label{sec:model}
The Cahn-Larch\'e system is a combination of a Cahn-Hilliard phase-field model and linearized elasticity with infinitesimal strains and displacements \cite{larche1973linear, garcke2005CHENumerics}. We consider the domain $\Omega\subset \mathbb{R}^d$ with Lipschitz boundary, where $d$ is the spatial dimension, and the time interval $[0,T]$ with final time $T$. Let $\varphi: \Omega\times[0,T]\rightarrow [-1,1]$ be the phase-field variable, where pure phases are attained for $\varphi = -1$, $\varphi = 1$. Moreover, let $\bm u: \Omega\times[0,T] \rightarrow \mathbb{R}^d$ be the infinitesimal displacement.
\subsection{Balance laws and constitutive relations}
 We assume that the phase-field $\pf$ follows the balance law
 \begin{equation*}
     \partial_t \varphi + \nabla \cdot \bm J = R,
 \end{equation*}
 where $\bm J$ is the phase-field flux and $R$ accounts for reactions. Moreover, the stress follows quasi-static linear momentum balance (ignoring inertial effects)
\begin{equation*}
    -\nabla\cdot \bm \sigma = \bm f,
\end{equation*}
where $\bm\sigma$ is the stress-tensor and $\bm f$ corresponds to external forces. The free energy $\mathcal{E}(\varphi, \bm u)$ of the system is assumed to be an additive combination of the regularized interface energy $\mathcal{E}_\mathrm{ch}(\varphi)$ and the potential elastic energy  $\mathcal{E}_\mathrm{e}(\varphi, \bm u)$
\begin{equation}\label{eq:freeenergy}
\mathcal{E}(\pf,\bm u) := \mathcal{E}_\mathrm{ch}(\varphi) + \mathcal{E}_\mathrm{e}(\varphi, \bm u).
\end{equation}
The regularized chemical energy of the system is defined as 
\begin{equation}\label{eq:chenergy}
     \mathcal{E}_\mathrm{ch}(\pf):=  \int_\Omega \gamma\left(\frac{1}{\ell}\Psi(\pf) + \frac{\ell}{2}|\nabla\pf|^2\right)\;dx,
\end{equation}
where $\Psi(\varphi)$, often chosen as $\Psi(\varphi) = \left(1-\pf^2\right)^2$, is a double-well potential that penalizes non-pure phase-field values ($|\pf|\neq 1$), and 
$\frac{|\nabla\pf|^2}{2}$ regularizes the transition between phases by penalizing rapid changes (in space) of the phase-field. The parameter $\gamma$ is related to the interfacial tension between the two phases, and can be considered to account for adhesive/cohesive forces between the phases, and $\ell$ is related to the width of the regularization region.
The elastic potential energy is 
\begin{equation}
    \mathcal{E}_\mathrm{e}(\pf, \bm u) := \frac{1}{2} \int_\Omega \left(\bm\varepsilon(\bm u)-\xi \left(\pf-\tilde{\pf}\right)\bm I\right):\mathbb{C}(\pf)\left(\bm\varepsilon(\bm u)-\xi \left(\pf-\tilde{\pf}\right)\bm I\right)\; dx
\end{equation}
where $\bm\varepsilon (\bm u) := \frac{\nabla \bm u+\nabla \bm u^\top}{2}$ is the linearized symmetric strain tensor, $\mathbb{C}(\pf)$ is the fourth order elasticity tensor, the term $\xi \left(\pf-\tilde{\pf}\right) \bm I$ accounts for swelling effects where $\tilde{\pf}$ is a reference phase-field, and $\bm I$ is the identity tensor in $\mathbb{R}^{d\times d}$. For the rest of the paper, we assume that $\tilde{\pf} = 0$ to make the notation more simplistic. All the theory and numerical examples can trivially be extended to account for $\tilde{\pf}\in [-1,1]$.

As constitutive relations we assume that the phase-field flux $\bm J$ is diffusive and follows Fick's law
\begin{equation*}
    \bm J = -m(\pf) \nabla \mu,
\end{equation*}
where $m(\pf)$ is the chemical mobility, which we will assume to be constant in this work, and $\mu$ is the chemical potential, which is defined as the rate of change, variational derivative, of the free energy of the system with respect to the phase-field. Here, we denote the variational derivative of $\mathcal{E}$ with respect to $y$ by $\delta_y\mathcal{E}$, and standard computations yield
\begin{eqnarray*}
    \mu := \delta_\pf \mathcal{E}(\varphi, \bm u) &=& \gamma\left(\frac{1}{\ell}\Psi'(\varphi) -\ell \Delta \pf\right)- \xi \bm I:\mathbb{C}(\pf)\left(\bm\varepsilon(\bm u)-\xi \pf\bm I\right) \\&& +\frac{1}{2}\left(\bm\varepsilon(\bm u)-\xi \pf\bm I\right):\mathbb{C}'(\pf)\left(\bm\varepsilon(\bm u)-\xi \pf\bm I\right),
\end{eqnarray*}
where, we have utilized that the normal derivative of the phase-field vanishes on the boundary ($\nabla\pf \cdot\bm n = 0$ at  $\partial\Omega$). 
The stress tensor $\bm \sigma$ is defined as the rate of change of the free energy with respect to strain $\bm\varepsilon$
\begin{equation*}
    \bm \sigma := \delta_{\bm \varepsilon}\mathcal{E}(\pf, \bm \varepsilon(\bm u)) = \mathbb{C}(\pf)\left(\bm\varepsilon(\bm u) - \xi\pf\bm I\right).
\end{equation*}

In total, we search for the triplet $(\pf, \mu, \bm u)$ such that
\begin{eqnarray}
\partial_t \varphi - \nabla \cdot (m\nabla \mu) &= R \label{eq:ch1} \quad&\mathrm{in}\quad \Omega\times[0,T],\\
\mu +\gamma\left(\ell\Delta \varphi - \frac{1}{\ell}\Psi'(\varphi)\right) - \delta_\varphi\mathcal{E}_\mathrm{e}(\varphi, \bm u) &=  0\label{eq:ch2}\quad&\mathrm{in}\quad \Omega\times[0,T],\\
-\nabla \cdot\left(\mathbb{C}(\varphi)\left({\bm \varepsilon}(\bm u)-\xi\varphi\bm I\right)\right) &= \bm f\label{eq:elasticity}\quad&\mathrm{in}\quad \Omega\times[0,T],
\end{eqnarray}
with the boundary conditions $\nabla\pf\cdot \bm n = \nabla\mu\cdot\bm n = 0$ and $\bm u = \bm u_b$ on $\partial\Omega\times[0,T]$, and initial condition $\pf = \pf_0$ in $\Omega\times \{0\}$. For completeness, we mention that
\begin{equation}\label{eq:elasticenergyderivative}
    \delta_\varphi\mathcal{E}_\mathrm{e}(\varphi, \bm u) = \frac{1}{2}\left({\bm \varepsilon}(\bm u) - \xi\pf\bm I\right)\!:\!\mathbb{C}'(\varphi)\left({\bm \varepsilon}(\bm u) - \xi\pf\bm I\right) - \xi\bm I\!:\!\mathbb{C}(\pf)\left({\bm \varepsilon}(\bm u) - \xi\pf\bm I\right),
\end{equation}
where the elasticity tensor $\mathbb{C}(\pf)$ is depending on the phase-field through the interpolation function $\pi(\pf)$; $\mathbb{C}(\pf) = \mathbb{C}_{-1} + \pi(\pf)(\mathbb{C}_1 -\mathbb{C}_{-1})$, and we assume for simplicity to have homogeneous Dirichlet boundary conditions for the elasticity subproblem, i.e., $\bm u_b = 0$.

\subsection{Phase-field independent elasticity tensor}
A simplified model is obtained in the special case of phase-field independent elasticity tensor $\mathbb{C}(\pf) = \mathbb{C}$. We consider it as a special case here because it is a popular simplification to the system, and the analysis of it will make the foundation for the numerical solution strategies for the situations where the elasticity tensor depends on the phase-field. The system \eqref{eq:ch1}--\eqref{eq:elasticity} now becomes: Find $(\pf, \mu, \bm u)$ such that
\begin{eqnarray}
\partial_t \varphi - \nabla \cdot (m \nabla \mu) &= R \label{eq:homoch1}\quad&\mathrm{in}\quad \Omega\times[0,T],\\
\mu +\gamma\left(\ell\Delta \varphi - \frac{1}{\ell}\Psi'(\varphi)\right) + \xi \bm I:\mathbb{C}\left(\bm\varepsilon(\bm u) - \xi\pf\bm I\right) &= 0\label{eq:homoch2}\quad&\mathrm{in}\quad \Omega\times[0,T],\\
-\nabla \cdot\left(\mathbb{C}\left({\bm \varepsilon}(\bm u)-\xi\pf\bm I\right)\right) &= \bm f\label{eq:homoelasticity}\quad&\mathrm{in}\quad \Omega\times[0,T],
\end{eqnarray}
with the boundary conditions $\nabla\pf\cdot \bm n = \nabla\mu\cdot\bm n = 0$ and $\bm u = 0$ on $\partial\Omega\times[0,T]$, and initial condition $\pf = \pf_0$ in $\Omega\times \{0\}$.

\begin{remark}
 Notice that the equations \eqref{eq:ch1} and \eqref{eq:homoch1} imply that the total phase-field is balanced in time by the reaction term
 \begin{equation}
     \partial_t \int_\Omega \pf \; dx = \int_\Omega R \; dx
 \end{equation} due to the homogeneous Neumann boundary conditions on $\mu$.
\end{remark}

\subsection{Assumptions on material parameters}
In this paper we will use the following assumptions on the model:
\begin{itemize}
    \item[(A1)] We require that the double-well potential has a convex-concave splitting 
    \begin{equation*}
        \Psi(\pf) = \Psi_c(\pf) -\Psi_e(\pf),
    \end{equation*}
    where $\Psi_c(\pf)$ and $\Psi_e(\pf)$ are convex functions, and that the derivative of the convex part $\Psi_c'(\pf)$ is Lipschitz continuous
    \begin{equation*}
        \left(\Psi_c'(\pf_1)-\Psi_c'(\pf_2)\right)(\pf_1 - \pf_2)\leq L_{\Psi_c}(\pf_1-\pf_2)^2, \quad \forall \pf_1, \pf_2\in \mathbb{R},
    \end{equation*}
    with Lipschitz constant $L_{\Psi_c}$. 
    The convex-concave splitting of the classical double-well potential does not satisfy this assumption, since the Lipschitz constant of the convex part is not bounded. To rectify this situation, we modify the double-well potential outside the interval $(-\theta ,\theta)$, for some choice of $\theta>1$, in the following way:
     \begin{equation*}
        \Psi(\pf) = 
        \begin{cases}
            2\left(\theta^2-1\right)\pf^2-\left(\theta^4-1\right),& \quad \pf\geq \theta,\\
            (1-\pf^2)^2,& \quad \pf\in(-\theta,\theta),\\
            2\left(\theta^2-1\right)\pf^2-\left(\theta^4-1\right),& \quad \pf\leq -\theta,
        \end{cases}
    \end{equation*}
    which is split into the convex functions
    \begin{equation*}
        \Psi_c(\pf) = 
        \begin{cases}
            2\theta^2\pf^2-\left(\theta^4-1\right),& \quad \pf\geq \theta,\\
            \pf^4+1,& \quad \pf\in(-\theta,\theta),\\
            2\theta^2\pf^2-\left(\theta^4-1\right),& \quad \pf\leq -\theta,
        \end{cases}
    \end{equation*}
    and
    \begin{equation*}
        \Psi_e(\pf) = 2\pf^2.
    \end{equation*}
    This modification ensures the uniformly bounded Lipschitz continuity of $\Psi_c'$, with bound $L_{\Psi_c} = 2\theta^2$, without altering the solution to the problem, since the phase-field rarely takes values outside $[-1,1]$.
    \item[(A2)] There exist constants $c_\mathbb{C}>0$ and $C_\mathbb{C}>0$ such that 
    \begin{equation}\label{eq:posdefC}
        c_\mathbb{C}\|\bm e\|^2_{L^2(\Omega)}\leq \left(\mathbb{C}(s)\bm e;\bm e\right)\leq C_\mathbb{C}\|\bm e\|^2_{L^2(\Omega)}
    \end{equation}
    for all symmetric second order tensor functions $\bm e\in L^2(\Omega)$ and scalar functions $s\in L^\infty(\Omega)$, where $( \cdot ; \cdot )$ is the $L^2(\Omega)$ tensor inner-product. It follows that $(\bm e, \bm w) \mapsto \left(\mathbb{C}(s)\bm e;\bm w\right)$ defines an inner-product on $L^2(\Omega)$, hence we have the Cauchy-Schwarz'-type inequality 
    \begin{equation}\label{eq:tensorCS}
        \left(\mathbb{C}(s)\bm e;\bm w\right)\leq \left(\mathbb{C}(s)\bm e;\bm e\right)^\frac{1}{2}\left(\mathbb{C}(s)\bm w;\bm w\right)^{\frac{1}{2}}.
    \end{equation}
\end{itemize}

\section{Numerical solution strategies for the Cahn-Larch\'e equations}\label{sec:discretization}
We now consider numerical solution strategies for the Cahn-Larch\'e equations with the aim of establishing an efficient and robust solver. At first, in Section~\ref{sec:homo}, a solution strategy for the system with phase-field independent elasticity tensor \eqref{eq:homoch1}--\eqref{eq:homoelasticity} is proposed. Then, in Section~\ref{sec:hetero}, the equations with phase-field dependent elasticity tensor \eqref{eq:ch1}--\eqref{eq:elasticity} are considered.

\subsection{Notation, variational system of equations and discrete function spaces}
Throughout the paper $(\cdot,\cdot)$ will denote the $L^2(\Omega)$ inner product for scalar- and vector-valued functions, $\langle\cdot,\cdot\rangle$ is the duality pairing, and $\langle\cdot,\cdot\rangle_X$ represents specific inner products defined on the Hilbert space $X$.
We consider the following continuous variational formulation of the system \eqref{eq:ch1}--\eqref{eq:elasticity}: Find $\left(\pf, \mu, \bm u\right)\in H^1\left([0,T],H^1(\Omega)\right)\times L^2\left([0,T],H^1(\Omega)\right)\times L^2\left([0,T],\left(H^1_0(\Omega)\right)^d\right)$ such that
\begin{eqnarray}
\left( \partial_t \pf,q^\pf \right) + \left( m \nabla\mu,\nabla q^\pf\right) -\left(  R ,q^\pf\right)&=&0 \label{eq:varch1} \\
 \left( \mu,q^\mu\right)- \gamma\ell\left( \nabla\pf,\nabla q^\mu\right) - \frac{\gamma}{\ell}\left(  \Psi'(\pf), q^\mu\right)  -\left(\delta_\pf\mathcal{E}_{\mathrm{e}}(\pf, \bm u),q^\mu\right) &=& 0 \label{eq:varch2}\\
  \left( \mathbb{C}(\varphi)\left(\bm\varepsilon\left(\bm u\right)- \xi\pf\bm I \right);\bm\varepsilon(\bm v)\right) - \left( \bm f,\bm v\right) &=& 0, \label{eq:varelastic}
\end{eqnarray}
for all $\left(q^\pf, q^\mu, \bm v\right)\in H^1(\Omega)\times H^1(\Omega)\times \left(H^1_0(\Omega) \right)^d,$ and almost all $t\in [0,T]$.

As notation for the discrete equations, let $\tau$ be a uniform time-step size, defined by $\tau:=\frac{T}{N}$, where $N$ is the number of time steps. Moreover, the index $n$ will refer to the time step, $h$ the mesh diameter, and $i$ the iteration number. Let $Q_h\subseteq H^1(\Omega)$ and $\bm V_h\subseteq \left(H^1_0(\Omega)\right)^d$ be conforming 
finite element function spaces, where $Q_h$ is the solution space for phase-field and chemical potential, and $\bm V_h$ is the solution space for the displacement. 
Furthermore, we define $Q_{h,0} = \left\{q_h \in Q_h: \int_\Omega q_h\; dx = 0 \right\}$, and consider the dual space of $\left(Q_{h,0},\|\cdot\|_{h,m}\right)$ where $\|q_h\|_{h,m}:=\|m^{\frac{1}{2}}\nabla q_h\|_{L^2(\Omega)}$ 
as $Q^{*}_{h,m}$ with canonical dual norm $\|\cdot\|_ {Q_{h,m}^*}$. Notice that the space $Q_{h,m}^*$ is a discrete superspace of $H^{-1}(\Omega)$.

Due to the Lax-Milgram lemma there exists a unique $v_h\in Q_{h,0}$ for all $s_h\in Q_{h,m}^{*}$ such that 
\begin{equation}\label{eq:duality-h1}
    \langle s_h,q_h\rangle = \left(m\nabla v_h,\nabla q_h\right), \quad \forall q_h\in Q_{h,0}.
\end{equation}
Thereby, we have 
\begin{equation}\label{eq:QdualNorm}
\|s_h\|_{Q^{*}_{h,m}}:= \sup_{\substack{q_h\in Q_{h,0}\\ \|q_h\|_{h,m}\neq 0}}\frac{\langle s_h, q_h\rangle}{\|q_h\|_{h,m}} = \sup_{\substack{q_h\in Q_{h,0}\\\|q_h\|_{h,m}\neq0}}\frac{ \left(m\nabla v_h,\nabla q_h\right)}{\|m^{\frac{1}{2}}\nabla q_h\|_{L^2(\Omega)}} = \|m^{\frac{1}{2}}\nabla v_h\|_{L^2(\Omega)},
\end{equation}
where $v_h$ satisfies \eqref{eq:duality-h1}. Moreover, we identify the ${Q^{*}_{h,m}}$ inner-product for $s_h,l_h\in Q_{h,0}$ as 
\begin{equation}
    \langle s_h,l_h\rangle_{Q^{*}_{h,m}}:=(s_h,v_h)
\end{equation}
where $v_h\in Q_{h}$ is a solution to the variational equation
\begin{equation}\label{eq:preremark}
    (l_h,q_h) = (m\nabla v_h, \nabla q_h), \quad\forall q_h\in Q_{h,0}.
\end{equation}
We then have that 
\begin{equation}
    \langle s_h, s_h\rangle^{\frac{1}{2}}_{Q_{h,m}^{*}} = (s_h,r_h)^{\frac{1}{2}} = (m\nabla r_h, \nabla r_h)^{\frac{1}{2}} = \|m^{\frac{1}{2}}\nabla r_h\|_{L^2(\Omega)} = \|s_h\|_{Q_{h,m}^{*}}
\end{equation}
where $r_h\in Q_h$ satisfies $(s_h,q_h) = (m\nabla r_h, \nabla q_h)$ for all $q_h\in Q_{h,0}$.
\begin{remark}\label{rem:canonical}
Notice that, as $l_h\in Q_{h,0}$, equation \eqref{eq:preremark} holds for all $q_h\in Q_h$, and uniqueness of $v_h$ can be imposed by prescribing its mean. Choosing different values for the mean of $v_h$ does not alter the value of the inner-product $(s_h,v_h)$ as $s_h\in Q_{h,0}$.
\end{remark}

\subsection{Solution strategy for Cahn-Larch\'e with phase-field-independent elasticity tensor}\label{sec:homo}
Here, we present a robust solution strategy for the Cahn-Larch\'e equations in the special case where the elasticity tensor is independent of the phase-field, \eqref{eq:homoch1}--\eqref{eq:homoelasticity}. First, we discretize the equations by the convex-concave splitting of the double-well potential (A1), i.e., we evaluate the convex part implicitly in time and the expansive part explicitly to make the discrete system more suitable for linearization techniques. Moreover, we show that the discrete system of equations are equivalent to a minimization problem and utilize its structure to show unconditional gradient stability of the discretization (the free energy of the system does not increase without the presence of external contributions). Then, we prove convergence of alternating minimization applied to the minimization problem.

\subsubsection{Discrete system of equations}
Using the convex-concave splitting method in time for the double-well potential (A1), and evaluating other terms implicitly, we get the discretized (in time and space) system of equations corresponding to \eqref{eq:varch1}--\eqref{eq:varelastic} with  phase-field independent elasticity tensor as:
Given $\pf^{n-1}_h\in Q_h$, find $\pf^n_h, \mu_h^n \in Q_h$ and $\bm u^n_h \in \bm V_h$, such that 
\begin{eqnarray}
\left( \frac{\pf^n_h-\pf^{n-1}_h}{\tau},q^\pf_h \right) + \left( m \nabla\mu^n_h,\nabla q^\pf_h\right) -\left(  R^n ,q_h^\pf\right)&=&0 \label{eq:homoweakch1} \\
 \left( \mu_h^n,q_h^\mu\right)- \gamma\ell\left( \nabla\pf_h^n,\nabla q_h^\mu\right) - \frac{\gamma}{\ell}\left(  \Psi_c'(\pf_h^n) - \Psi_e'(\pf_h^{n-1}), q_h^\mu\right)  +\left( \mathbb{C}\left(\bm\varepsilon\left(\bm u^n_h\right) - \xi\pf_h^n\bm I\right); q_h^\mu\xi\bm I\right) &=& 0 \label{eq:homoweakch2}\\
  \left( \mathbb{C}\left(\bm\varepsilon\left(\bm u_h^n\right)- \xi\pf_h^n\bm I \right);\bm\varepsilon(\bm v_h)\right) - \left( \bm f^n,\bm v_h\right) &=& 0, \label{eq:homoweakelastic}
\end{eqnarray}
for all $q_h^\pf, q_h^\mu \in Q_h$, and all $\bm v_h \in \bm V_h$. Similar discretizations have been considered in \cite{graser2014numerical} for a phase-field dependent elasticity tensor, and in \cite{garcke2005CHENumerics} without a convex-concave splitting of the double-well potential.

\begin{prop}\label{prop:minimization}
The solution to the discrete problem \eqref{eq:homoweakch1}--\eqref{eq:homoweakelastic} is equivalent to the solution of the minimization problem: Given $\pf^{n-1}_h\in Q_h$, solve
\begin{eqnarray}\label{eq:minimizationhomo}
    (\pf^n_h,\bm u^n_h) = \argmin_{s_h\in \bar{Q}^n_{h}, \bm w_h\in \bm V_h} \mathcal{H}^n_\tau(s_h, \bm w_h)
\end{eqnarray}
where the admissible space for the phase-field is defined as
\begin{equation}\label{eq:admissible}
\bar{Q}_h^n := \left\{ s_h \in Q_h \,\left|\, \int_\Omega \frac{s_h - \varphi_h^{n-1}}{\tau} \, dx = \int_\Omega R^n \, dx \right. \right\}    
\end{equation}
and
$$ \mathcal{H}^n_\tau(s_h,\bm w_h) := \dfrac{\|s_h-\pf_h^{n-1}-\tau R^n\|^2_{Q_{h,m}^*}}{2\tau} + \mathcal{E}_c(s_h,\bm w_h) - \frac{\gamma}{\ell}\left(\Psi_e'(\pf^{n-1}_h),s_h\right)
 - \left( \bm f^n, \bm w_h \right),$$
where
\begin{equation*}
    \mathcal{E}_c(s_h,\bm w_h) := \int_\Omega \frac{\gamma}{\ell}\Psi_c(s_h) + \gamma \ell\frac{|\nabla s_h|^2}{2} + \frac{1}{2}\left(\bm \varepsilon(\bm w_h) - \xi s_h \bm I\right):\mathbb{C}\left(\bm \varepsilon(\bm w_h) - \xi s_h \bm I\right)\; dx.
\end{equation*}
\end{prop}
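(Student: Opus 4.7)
The plan is to exploit the (strict) convexity of $\mathcal{H}^n_\tau$ on the affine admissible set $\bar{Q}^n_h\times\bm V_h$ and to show that the resulting first-order optimality conditions are exactly the variational system \eqref{eq:homoweakch1}--\eqref{eq:homoweakelastic}. Convexity is straightforward: by (A1), $\Psi_c$ is convex; the gradient penalty and the dual-norm term are convex quadratics; the elastic contribution equals $\tfrac12\bigl(\mathbb{C}\,A(s_h,\bm w_h);A(s_h,\bm w_h)\bigr)$ with $A(s_h,\bm w_h):=\bm\varepsilon(\bm w_h)-\xi s_h\bm I$ linear and $\mathbb{C}$ positive definite by (A2); and the remaining terms are affine. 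Since $\bar{Q}^n_h$ is an affine subset of $Q_h$, a minimizer exists, is unique, and is characterized by first-order optimality.

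The Euler--Lagrange conditions would be computed next. Variation in $\bm w_h$ touches only $\mathcal{E}_c$ and $-(\bm f^n,\bm w_h)$, and a direct computation yields \eqref{eq:homoweakelastic}. Variation in $s_h$ is tested against $q_h\in Q_{h,0}$, the tangent space of the constraint. The only non-standard piece is the derivative of the dual-norm term: using the defining relation \eqref{eq:duality-h1} for $\|\cdot\|_{Q^*_{h,m}}$, a short chain-rule calculation shows that the Gâteaux derivative along $q_h\in Q_{h,0}$ equals $(q_h,v_h)/\tau$, where $v_h\in Q_h$ solves
$$(m\nabla v_h,\nabla p)=\bigl(s_h-\pf^{n-1}_h-\tau R^n,p\bigr)\quad\forall p\in Q_{h,0}.$$
Setting $\mu^n_h:=-v_h/\tau+c$, where the constant $c$ plays the dual role of the Lagrange multiplier associated with the constraint $\bar{Q}^n_h$ and of the gauge freedom in $v_h$ (which is determined only up to an additive constant, cf.\ Remark~\ref{rem:canonical}), the $s_h$-optimality condition collapses term-by-term onto \eqref{eq:homoweakch2} for all $q^\mu_h\in Q_{h,0}$. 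The free constant $c$ is then pinned so that \eqref{eq:homoweakch2} extends to every $q^\mu_h\in Q_h$.

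Equation \eqref{eq:homoweakch1} is recovered by unfolding the definition of $v_h$: substituting $v_h=-\tau\mu^n_h+\tau c$ into the Poisson relation gives \eqref{eq:homoweakch1} against all test functions in $Q_{h,0}$, while the admissibility constraint $s_h\in\bar{Q}^n_h$ is precisely \eqref{eq:homoweakch1} tested against constants (the diffusive term vanishing). The converse direction is immediate from convexity: given any triple $(\pf^n_h,\mu^n_h,\bm u^n_h)$ solving the weak system, the same identification $v_h=-\tau(\mu^n_h-c)$ shows that $(\pf^n_h,\bm u^n_h)$ satisfies the Karush--Kuhn--Tucker conditions for \eqref{eq:minimizationhomo} and must therefore coincide with the unique minimizer.

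The main obstacle is the bookkeeping around the three constant-mean gauges that enter the problem simultaneously: the discrete Poisson map $s_h\mapsto v_h$ is defined only modulo constants, the chemical potential $\mu^n_h$ is pinned by \eqref{eq:homoweakch2} only up to a constant, and the constraint defining $\bar{Q}^n_h$ introduces exactly one scalar Lagrange multiplier. Once it is verified that these three degrees of freedom match, the remainder of the proof is direct substitution and relies only on standard variational calculus.
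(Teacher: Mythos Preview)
Your proposal is correct and follows essentially the same route as the paper: derive the first-order optimality conditions on $\bar{Q}^n_h\times\bm V_h$, identify the derivative of the dual-norm term with a discrete Poisson problem defining $\mu^n_h$ up to a constant, fix that constant so \eqref{eq:homoweakch2} holds on all of $Q_h$, and recover \eqref{eq:homoweakch1} from the Poisson relation combined with the affine constraint. The paper's proof is slightly terser---it does not spell out the convexity argument or the converse direction via KKT, simply matching optimality conditions to the weak system---but the substance and the handling of the three mean-value gauges are the same.
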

\begin{proof}
We derive the optimality conditions of the minimization problem which are similar to \eqref{eq:homoweakch1}--\eqref{eq:homoweakelastic}, but over restricted spaces. By employing canonical extensions, we establish the equivalence. Let $\delta_\pf\mathcal{H}^n_\tau$ and $\delta_{\bm u}\mathcal{H}^n_\tau$ represent the variational derivatives with respect to the first and second argument of the potential $\mathcal{H}^n_\tau$ respectively. Then the optimality conditions to the minimization problem \eqref{eq:minimizationhomo} reads: Find $\pf_h^n, \bm u_h^n \in \bar{Q}^n_h\times \bm V_h$ such that
\begin{eqnarray}\label{eq:opt1}
   0 =  \langle \delta_\pf\mathcal{H}_\tau^n(\pf^n_h,\bm u^n_h),q_h\rangle &=& \left\langle \frac{\pf_h^n-\pf^{n-1}_h}{\tau}-R^n,q_h\right\rangle_{Q_{h,m}^{*}} + \left( \delta_\pf \mathcal{E}_c(\pf_h^n,\bm u_h^n)-\frac{\gamma}{\ell}\Psi_e'(\pf_h^{n-1}),q_h\right)\\\label{eq:minimizationu}
   0 = \langle \delta_{\bm u}\mathcal{H}_\tau^n(\pf^n_h,\bm u^n_h),\bm w_h\rangle &=& \left( \delta_{\bm \varepsilon(\bm u)}\mathcal{E}_c(\pf^{n}_h,\bm u_h^n);\bm \varepsilon(\bm w_h)\right)- \left( \bm f^n,\bm w_h\right),
\end{eqnarray}
for all $q_h\in Q_{h,0}$ and $\bm w_h\in \bm V_h$ where
\begin{equation*}
    \delta_\pf \mathcal{E}_c(\pf_h^n,\bm u_h^n) = \frac{\gamma}{\ell}\Psi_c'(\pf_h^n) - \gamma\ell\Delta \pf_h^n -\xi\bm I:\mathbb{C}\left(\bm \varepsilon(\bm u_h^n)- \xi \pf_h^n\bm I  \right)
\end{equation*}
and 
\begin{equation*}
    \delta_{\bm \varepsilon(\bm u)}\mathcal{E}_c(\pf_h^n, \bm u_h^n) = \mathbb{C}\big(\bm\varepsilon(\bm u_h^n)- \xi\pf_h^n\bm I \big).
\end{equation*}
Using the definition of $\langle\cdot,\cdot\rangle_{Q_{h,m}^*}$ equation \eqref{eq:opt1} is equivalent to
\begin{equation}\label{eq:minimizationmu}
   0 = \left( -\mu_h^n,q_h\right) + \left( \delta_\pf \mathcal{E}_c(\pf_h^n,\bm u_h^n)-\frac{\gamma}{\ell}\Psi_e'(\pf_h^{n-1}),q_h\right), \quad\forall q_h\in Q_{h,0}
\end{equation}
where $\mu_h^n$ is the solution to the problem 
\begin{equation}\label{eq:minimizaitonpf}
-(m\nabla \mu_h^n, \nabla l_h) = \left(\frac{\pf_h^n-\pf^{n-1}_h}{\tau}-R^n,l_h\right), \quad\forall l_h\in Q_{h,0},
\end{equation}
with mean fixed as
\begin{equation}\label{eq:restrictionmu}
\int_\Omega \mu_h^n
\; dx = \int_\Omega  \delta_\pf \mathcal{E}_c(\pf_h^n,\bm u_h^n)-\frac{\gamma}{\ell}\Psi_e'(\pf_h^{n-1})\; dx,
\end{equation}
in accordance with Remark~\ref{rem:canonical}. The constraint $\pf_h^n \in \bar{Q}_h^n$ and \eqref{eq:minimizaitonpf} are equivalent to requiring that equality \eqref{eq:minimizaitonpf} holds for all $l_h\in Q_h$. Due to \eqref{eq:restrictionmu}, equation \eqref{eq:minimizationmu} holds for all $q_h\in Q_h$, and we have that the solutions to \eqref{eq:minimizationu}, \eqref{eq:minimizationmu} and \eqref{eq:minimizaitonpf} are equivalent to the solutions of the discrete problem \eqref{eq:homoweakch1}--\eqref{eq:homoweakelastic}.
\end{proof}

\begin{remark}[Affine structure of the admissible set]
The admissible set for the phase-field in the optimization problem~\eqref{eq:minimizationhomo}, $\bar{Q}_h^n$, is an affine space. For any two $s_h^1,\ s_h^2\in \bar{Q}_h^n$ it holds that $s_h^1 - s_h^2 \in Q_{h,0}$.
\end{remark}

\begin{theorem}
    The discretization scheme \eqref{eq:homoweakch1}--\eqref{eq:homoweakelastic} is unconditionally gradient stable, i.e., the free energy 
    \begin{equation*}
        \mathcal{E}(\pf, \bm u) = \int_\Omega \gamma\left(\frac{1}{\ell}\Psi(\pf) + \frac{\ell}{2}|\nabla \pf|^2\right) + \frac{1}{2}\left(\bm \varepsilon(\bm u) - \xi \pf \bm I\right):\mathbb{C}\left(\bm \varepsilon(\bm u) - \xi \pf \bm I\right)\; dx
    \end{equation*}
    dissipates over the time-steps assuming the absence of external contributions ($R = 0$ and $\bm f = 0$). 
\end{theorem}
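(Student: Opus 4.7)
My strategy is to exploit the minimization characterization established in Proposition~\ref{prop:minimization}. With $R^n=0$ and $\bm f^n=0$, the admissible set collapses to $\bar{Q}_h^n = \{s_h \in Q_h : \int_\Omega s_h\,dx = \int_\Omega \pf_h^{n-1}\,dx\}$, so in particular the previous iterate $\pf_h^{n-1}$ is admissible. Therefore the pair $(\pf_h^{n-1}, \bm u_h^{n-1})$ is a valid competitor in the minimization problem~\eqref{eq:minimizationhomo}, and the minimizer $(\pf_h^n, \bm u_h^n)$ satisfies
\begin{equation*}
  \mathcal{H}^n_\tau(\pf_h^n, \bm u_h^n) \leq \mathcal{H}^n_\tau(\pf_h^{n-1}, \bm u_h^{n-1}).
\end{equation*}

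Expanding both sides using the definition of $\mathcal{H}^n_\tau$ (with $R^n = \bm f^n = 0$) and observing that the dual-norm term vanishes on the right-hand side since $\pf_h^{n-1} - \pf_h^{n-1} = 0$, I would rearrange the inequality to obtain
\begin{equation*}
  \mathcal{E}_c(\pf_h^n, \bm u_h^n) + \frac{\|\pf_h^n - \pf_h^{n-1}\|^2_{Q^*_{h,m}}}{2\tau} \leq \mathcal{E}_c(\pf_h^{n-1}, \bm u_h^{n-1}) + \frac{\gamma}{\ell}\bigl(\Psi_e'(\pf_h^{n-1}),\, \pf_h^n - \pf_h^{n-1}\bigr).
\end{equation*}

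Next I would decompose the target free energy as $\mathcal{E}(\pf, \bm u) = \mathcal{E}_c(\pf, \bm u) - \frac{\gamma}{\ell}\int_\Omega \Psi_e(\pf)\,dx$ and invoke the convexity of $\Psi_e$ from assumption (A1), which yields the pointwise tangent inequality
\begin{equation*}
  \Psi_e(\pf_h^n) - \Psi_e(\pf_h^{n-1}) \geq \Psi_e'(\pf_h^{n-1})\bigl(\pf_h^n - \pf_h^{n-1}\bigr).
\end{equation*}
Integrating over $\Omega$, multiplying by $\gamma/\ell$, and subtracting the resulting inequality from the previous display immediately produces
\begin{equation*}
  \mathcal{E}(\pf_h^n, \bm u_h^n) + \frac{\|\pf_h^n - \pf_h^{n-1}\|^2_{Q^*_{h,m}}}{2\tau} \leq \mathcal{E}(\pf_h^{n-1}, \bm u_h^{n-1}),
\end{equation*}
which is the desired dissipation (in fact with an explicit control on the time increment in the discrete $H^{-1}$-type norm).

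There is no real obstacle here beyond bookkeeping: the only substantive ingredients are (i) the minimization reformulation of the scheme, (ii) admissibility of the previous iterate when $R^n=0$, and (iii) convexity of the expansive part $\Psi_e$, which is precisely the reason for the convex--concave splitting in (A1). The argument is structurally the standard Eyre-type energy-stability proof, with the coupled elastic contribution handled automatically because $\mathcal{E}_c$ already contains the full elastic energy term, and the minimizer condition is joint in $(s_h,\bm w_h)$.
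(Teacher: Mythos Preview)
Your proposal is correct and follows essentially the same route as the paper's proof: both use the minimization characterization from Proposition~\ref{prop:minimization}, observe that $(\pf_h^{n-1},\bm u_h^{n-1})$ is an admissible competitor when $R^n=0$, expand the resulting inequality $\mathcal{H}^n_\tau(\pf_h^n,\bm u_h^n)\le\mathcal{H}^n_\tau(\pf_h^{n-1},\bm u_h^{n-1})$, and then invoke the convexity of $\Psi_e$ to pass from $\mathcal{E}_c$ to the full energy $\mathcal{E}$. Your write-up is in fact slightly more explicit than the paper's in identifying the decomposition $\mathcal{E}=\mathcal{E}_c-\frac{\gamma}{\ell}\int_\Omega\Psi_e\,dx$ and in retaining the nonnegative dissipation term $\frac{1}{2\tau}\|\pf_h^n-\pf_h^{n-1}\|^2_{Q^*_{h,m}}$ in the final estimate.
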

\begin{proof}
Exploiting the equivalence between the discrete system of equations \eqref{eq:homoweakch1}--\eqref{eq:homoweakelastic} and the minimization problem in Proposition~\ref{prop:minimization}, we get that
\begin{eqnarray*}
\mathcal{H}^n_\tau(\pf^n_h, \bm u^n_h)-\mathcal{H}^n_\tau(\pf^{n-1}_h, \bm u^{n-1}_h) &\leq& 0,
\end{eqnarray*}
due to the fact that $\pf^{n-1}\in \bar{Q}_h^n$ when $R=0$. It follows that
\begin{eqnarray*}
\dfrac{\|\pf_h^n-\pf_h^{n-1}\|^2_{Q^*_{h,m}}}{2\tau} + \mathcal{E}_c(\pf_h^n,\bm u_h^n) - \frac{\gamma}{\ell}\left(\Psi_e'(\pf^{n-1}_h),\pf_h^n\right) 
-\Bigg[\mathcal{E}_c(\pf_h^{n-1},\bm u_h^{n-1}) - \frac{\gamma}{\ell}\left(\Psi_e'(\pf^{n-1}_h),\pf_h^{n-1}\right) \Bigg] &\leq& 0,
\end{eqnarray*}
and by rearrangement and application of the convexity of $\Psi_e$ we get
\begin{equation*}
    \Psi_e(\pf^n_h)- \Psi_e(\pf^{n-1}_h) \geq \Psi_e'(\pf^{n-1}_h)(\pf^{n}_h - \pf_h^{n-1}).
\end{equation*}
Recalling that $\Psi(s) = \Psi_c(s) - \Psi_e(s)$ we get the inequality
\begin{equation*}
    \dfrac{\|\pf_h^n-\pf_h^{n-1}\|^2_{Q^*_{h,m}}}{2\tau} + \mathcal{E}(\pf_h^n,\bm u_h^n) - \mathcal{E}(\pf_h^{n-1},\bm u_h^{n-1})\leq 0.
\end{equation*}
Hence,
\begin{equation*}
    \mathcal{E}(\pf_h^n,\bm u_h^n) \leq  \mathcal{E}(\pf_h^{n-1},\bm u_h^{n-1})
\end{equation*}
for all $\tau$ and $n$.
\end{proof}

\subsubsection{Alternating minimization for the Cahn-Larch\'e equations with phase-field-independent elasticity tensor}\label{sec:am-homo}
There exists several ways to solve the nonlinear discrete system of equations \eqref{eq:homoweakch1}--\eqref{eq:homoweakelastic}, and due to the convexity of the related minimization problem (see Proposition~\ref{prop:minimization}) we expect the Newton method to be a viable and efficient choice. However, we propose here to solve the system with an alternating minimization method. The  main benefit of this is that it allows for the use of readily available solvers, as it corresponds to solving a Cahn-Hilliard equation and an elasticity equation subsequently. In each time step we initialize the solver with the solution at the previous time step
\begin{equation*}
    \pf_h^{n,0} = \pf_h^{n-1},\quad\mathrm{and}\quad \bm u_h^{n,0} = \bm u_h^{n-1},
\end{equation*}
and minimize the potential $\mathcal{H}^n_\tau$ sequentially
\begin{eqnarray}\label{eq:min1} 
    \pf^{n,i}_h &=& \argmin_{s_h\in \bar{Q}^n_h}\mathcal{H}^n_\tau(s_h,\bm u_h^{n,i-1}),\\
\label{eq:min2}
\bm u^{n,i}_h &=& \argmin_{\bm w_h\in \bm V_h}\mathcal{H}^n_\tau(\pf_h^{n,i},\bm w_h)
\end{eqnarray}
where $i$ is the iteration index. The corresponding variational system of equations in the $i$-th iteration reads: Given $(\pf_h^{n-1},\bm u_h^{n,i-1})\in Q_h\times \bm V_h$, find $(\pf_h^{n,i}, \mu_h^{n,i},\bm u_h^{n,i})\in Q_h\times Q_h\times \bm V_h$ such that
\begin{eqnarray}
\left( \frac{\pf^{n,i}_h-\pf^{n-1}_h}{\tau},q^\pf_h \right) + \left( m \nabla\mu_h^{n,i},\nabla q^\pf_h\right) -\left( R^n ,q_h^\pf\right)&=&0 \label{eq:homosplitweakch1} \\
 \left( \mu_h^{n,i},q_h^\mu\right) - \gamma\ell\left( \nabla\pf_h^{n,i},\nabla q_h^\mu\right) - \frac{\gamma}{\ell}\left(  \Psi_c'\left(\pf_h^{n,i}\right) - \Psi_e'\left(\pf_h^{n-1}\right), q_h^\mu\right)  &&\nonumber\\+\left( \mathbb{C}\left(\bm\varepsilon\left(\bm u^{n,i-1}_h\right) - \xi\pf_h^{n,i}\bm I\right); q_h^\mu\xi\bm I\right) &=& 0 \label{eq:homosplitweakch2}\\
  \left( \mathbb{C}\left(\bm\varepsilon\left(\bm u_h^{n,i}\right)- \xi\pf_h^{n,i}\bm I \right);\bm\varepsilon(\bm v_h)\right) - \left( \bm f^n,\bm v_h\right) &=& 0\label{eq:homosplitweakelasticity}
\end{eqnarray}
for all $(q^\pf_h, q^\mu_h,\bm v_h)\in Q_h\times Q_h\times \bm V_h$. Here, the space $Q_h$ appears in the discrete system instead of $\bar{Q}_h^n$ due to the same argumentation as in the proof of Proposition~\ref{prop:minimization}.

\begin{remark}
The Cahn-Hilliard subsystem \eqref{eq:homosplitweakch1}--\eqref{eq:homosplitweakch2} is still nonlinear due to $\Psi'_c(\pf^{n,i})$. In this work, we solve it with the Newton method which is known to converge for this problem \cite{guillenNewton}.
\end{remark}

We apply the abstract theory available in \cite{jakubAM} to prove that the alternating minimization algorithm converges and summarize the appropriate result as a lemma (using the notation of the present article):
\begin{lemma}\label{lem:Jakub}
Assume that there exist norms $\|(\cdot,\cdot)\|:Q_{h,0}\times\bm V_h\rightarrow \mathbb{R}^+$, $\|\cdot\|_\mathrm{ch}:Q_{h,0}\rightarrow\mathbb{R}^+$ and $\|\cdot\|_\mathrm{e}:\bm V_h\rightarrow\mathbb{R}^+$, related by the inequalities
\begin{equation}\label{eq:normrelation}
     \|(s_h,\bm w_h)\|^2 \geq \beta_\mathrm{ch}\|s_h\|_\mathrm{ch}^2,\quad\mathrm{and}\quad \|(s_h,\bm w_h)\|^2\geq \beta_\mathrm{e}\|\bm w_h\|_\mathrm{e}^2, \quad\forall (s_h,\bm w_h)\in Q_{h,0}\times \bm V_h,
\end{equation}
for some $\beta_{\mathrm{ch}},\beta_{\mathrm{e}}\geq 0$, and let the potential $\mathcal{H}:\bar{Q}_h^n\times \bm V_h\rightarrow \mathbb{R}$ be given. If
\begin{itemize}
    \item $\mathcal{H}$ is convex with respect to the norm $\|(\cdot,\cdot)\|$ with convexity constant $\sigma\geq 0$, i.e.,
    \begin{equation}\label{eq:coercive}
        \left\langle \delta \mathcal{H}\left(s_h^1,\bm w_h^1\right)-\delta \mathcal{H}\left(s_h^2,\bm w_h^2\right),\left(s_h^1-s_h^2, \bm w_h^1-\bm w_h^2\right)\right\rangle \geq \sigma \left\|\left(s_h^1-s_h^2, \bm w_h^1- \bm w_h^2)\right)\right\|^2,
    \end{equation}
    for all $ \left(s_h^1, s_h^2, \bm w_h^1, \bm w_h^2\right)\in \bar{Q}^n_{h}\times \bar{Q}^n_{h} \times \bm V_h\times\bm V_h,$
\end{itemize}
 and
\begin{itemize}
    \item the variational derivatives of $\mathcal{H}$ with respect to the first and second arguments are Lipschitz continuous in the norm $\|\cdot\|_\mathrm{ch}$ with constant $L_\mathrm{ch}$ and $\|\cdot\|_\mathrm{e}$ with constant $L_\mathrm{e}$, respectively, i.e., there exist $L_\mathrm{ch}>0$, $L_\mathrm{e}>0$ such that
    \begin{equation}\label{eq:lipschitzCH}
        \left\langle\delta_\pf \mathcal{H}\left(s_h^1,\bm w_h\right)- \delta_\pf \mathcal{H}\left(s_h^2,\bm w_h\right),s_h^1-s_h^2\right\rangle \leq L_\mathrm{ch}\left\|s_h^1-s_h^2\right\|^2_ \mathrm{ch},\quad \forall \left(s_h^1,s_h^2,\bm w_h\right)\in \bar{Q}_{h}^n\times \bar{Q}_{h}^n\times\bm V_h,
    \end{equation}
    and
      \begin{equation}\label{eq:lipschitzE}
        \left\langle\delta_{\bm u} \mathcal{H}\left(s_h,\bm w_h^1\right)- \delta_{\bm u} \mathcal{H}\left(s_h,\bm w_h^2\right),\bm w_h^1-\bm w_h^2\right\rangle \leq L_\mathrm{e}\left\|\bm w_h^1-\bm w_h^2\right\|^2_ \mathrm{e},\quad \forall \left(\bm w_h^1,\bm w_h^2,s_h\right)\in \bm V_h\times \bm V_h\times \bar{Q}^n_{h},
    \end{equation}
\end{itemize}
then the alternating minimization scheme (as proposed in \eqref{eq:min1}--\eqref{eq:min2} with $\mathcal{H}^n_\tau = \mathcal{H}$) converges in the sense that
\begin{equation*}
    \mathcal{H}\left(\pf_h^{n,i},\bm u_h^{n,i}\right)-\mathcal{H}\left(\pf_h^{n},\bm u_h^{n}\right)\leq\left(1-\frac{\sigma\beta_\mathrm{ch}}{L_\mathrm{ch}}\right)\left(1-\frac{\sigma\beta_\mathrm{e}}{L_\mathrm{e}}\right)
    \left(\mathcal{H}\left(\pf_h^{n,i-1},\bm u_h^{n,i-1}\right)-\mathcal{H}\left(\pf_h^{n},\bm u_h^{n}\right)\right),
\end{equation*}
where $\left(\pf_h^{n},\bm u_h^{n}\right)\in \bar{Q}_h^n\times \bm V_h$ is the minimizer of $\mathcal{H}$.
\end{lemma}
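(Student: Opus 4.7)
The plan is to invoke the abstract convergence framework of \cite{jakubAM} and fit the present hypotheses into it. The pivotal observation is that alternating minimization enforces a structural zero on the partial gradient: after the $\pf$-step, $\delta_\pf \mathcal{H}(\pf_h^{n,i}, \bm u_h^{n,i-1}) = 0$ on the tangent space $Q_{h,0}$ of $\bar{Q}_h^n$, and after the $\bm u$-step, $\delta_{\bm u}\mathcal{H}(\pf_h^{n,i}, \bm u_h^{n,i}) = 0$ on $\bm V_h$. At the start of a given sub-step the full gradient of $\mathcal{H}$ is therefore supported in only one block, and per-sub-step linear contraction follows from combining a block descent estimate with a Polyak-\L{}ojasiewicz-type inequality.

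The first ingredient is a block descent estimate derived from \eqref{eq:lipschitzCH}. Using that $\pf_h^{n,i}$ exactly minimizes $s_h \mapsto \mathcal{H}(s_h, \bm u_h^{n,i-1})$ over $\bar{Q}_h^n$, together with the fact that on a convex function the one-sided Lipschitz bound in \eqref{eq:lipschitzCH} upgrades to the standard quadratic upper bound $\mathcal{H}(s_h, \bm u_h^{n,i-1}) \leq \mathcal{H}(\pf_h^{n,i-1}, \bm u_h^{n,i-1}) + \langle \delta_\pf\mathcal{H}(\pf_h^{n,i-1}, \bm u_h^{n,i-1}), s_h - \pf_h^{n,i-1} \rangle + \tfrac{L_\mathrm{ch}}{2}\|s_h-\pf_h^{n,i-1}\|_\mathrm{ch}^2$, one minimizes the right-hand side over $s_h\in\bar{Q}_h^n$ by identifying the dual element $\delta_\pf\mathcal{H}$ via the $\|\cdot\|_\mathrm{ch}$-Riesz map (the minimizing increment lies in $Q_{h,0}$, as required) and obtains
\begin{equation*}
\mathcal{H}(\pf_h^{n,i-1}, \bm u_h^{n,i-1}) - \mathcal{H}(\pf_h^{n,i}, \bm u_h^{n,i-1}) \;\geq\; \frac{1}{2L_\mathrm{ch}}\,\|\delta_\pf\mathcal{H}(\pf_h^{n,i-1}, \bm u_h^{n,i-1})\|_{\mathrm{ch},*}^2,
\end{equation*}
where $\|\cdot\|_{\mathrm{ch},*}$ denotes the dual norm on $Q_{h,0}^*$. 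An analogous descent holds for the $\bm u$-step with $L_\mathrm{e}$.

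The second ingredient is a \L{}ojasiewicz-type bound coming from the joint strong convexity \eqref{eq:coercive} together with the compatibility \eqref{eq:normrelation}. Applying \eqref{eq:coercive} between $(\pf_h^{n,i-1}, \bm u_h^{n,i-1})$ and the global minimizer $(\pf_h^n, \bm u_h^n)$, using $\delta_{\bm u}\mathcal{H}(\pf_h^{n,i-1}, \bm u_h^{n,i-1}) = 0$ to kill the $\bm u$-block of the Taylor term, and applying Young's inequality with weight $\sigma$ on the remaining $\pf$-block yields
\begin{equation*}
\mathcal{H}(\pf_h^{n,i-1}, \bm u_h^{n,i-1}) - \mathcal{H}(\pf_h^n, \bm u_h^n) \;\leq\; \frac{1}{2\sigma\beta_\mathrm{ch}}\,\|\delta_\pf\mathcal{H}(\pf_h^{n,i-1}, \bm u_h^{n,i-1})\|_{\mathrm{ch},*}^2,
\end{equation*}
where the reduction from the joint dual norm to $\|\cdot\|_{\mathrm{ch},*}$ uses that \eqref{eq:normrelation} dualizes to $\|(g, 0)\|_{*}^2 \leq \beta_\mathrm{ch}^{-1}\|g\|_{\mathrm{ch},*}^2$ for any $g \in Q_{h,0}^*$.

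Chaining the descent estimate with the \L{}ojasiewicz bound produces the single-sub-step contraction
\begin{equation*}
\mathcal{H}(\pf_h^{n,i}, \bm u_h^{n,i-1}) - \mathcal{H}(\pf_h^n, \bm u_h^n) \;\leq\; \left(1 - \frac{\sigma\beta_\mathrm{ch}}{L_\mathrm{ch}}\right)\!\left(\mathcal{H}(\pf_h^{n,i-1}, \bm u_h^{n,i-1}) - \mathcal{H}(\pf_h^n, \bm u_h^n)\right).
\end{equation*}
Running the same argument with the roles of $\pf$ and $\bm u$ exchanged, now exploiting $\delta_\pf\mathcal{H}(\pf_h^{n,i}, \bm u_h^{n,i-1}) = 0$ from the freshly completed $\pf$-step, yields the factor $1 - \sigma\beta_\mathrm{e}/L_\mathrm{e}$; multiplying the two contractions produces the stated rate. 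I expect the principal technical subtlety to be the affine nature of $\bar{Q}_h^n$: the partial gradient $\delta_\pf\mathcal{H}$ must be interpreted as an element of the dual of $Q_{h,0}$, and both the descent and \L{}ojasiewicz derivations rely on the fact that admissible increments such as $\pf_h^{n,i}-\pf_h^{n,i-1}$ and $\pf_h^n-\pf_h^{n,i-1}$ automatically lie in $Q_{h,0}$, so that the Riesz-map and Young-inequality steps are well-defined on the correct space.
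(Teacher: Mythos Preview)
The paper does not prove this lemma: it is stated as a summary of the abstract convergence result from \cite{jakubAM}, adapted to the present notation, and no proof is given in the paper itself. Your proposal is a correct reconstruction of the standard argument underlying that reference---combining a block descent estimate from the partial Lipschitz bounds \eqref{eq:lipschitzCH}--\eqref{eq:lipschitzE} with a Polyak--\L{}ojasiewicz inequality obtained from the joint strong convexity \eqref{eq:coercive}, exploiting at each sub-step that the complementary block gradient vanishes. Your treatment of the affine constraint (working on the tangent space $Q_{h,0}$) and the dualization of \eqref{eq:normrelation} are both handled appropriately, so there is nothing to correct; you have simply supplied what the paper outsources to the citation.
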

\begin{remark}
Notice that $\frac{\sigma\beta_\mathrm{ch}}{L_\mathrm{ch}}\leq 1$ and $\frac{\sigma\beta_\mathrm{e}}{L_\mathrm{e}}\leq 1$ due to \eqref{eq:normrelation}--\eqref{eq:lipschitzE}.
\end{remark}
We are also going to take advantage of the following inverse inequality:
\begin{lemma}\label{lem:inverseinequality}
There exists a constant $C_\mathrm{inv}>0$ such that 
\begin{equation*}
    C_\mathrm{inv}h^{-1}\|s_h\|_{Q^*_{m,h}}\geq\|s_h\|_{L^2(\Omega)},
\end{equation*}
for all $s_h\in Q_{h,0}$.
\end{lemma}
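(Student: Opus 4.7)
The plan is to exploit the definition of the dual norm by testing against a well-chosen element of $Q_{h,0}$, and then to invoke the classical finite element inverse estimate on conforming finite element spaces. The key observation is that $s_h$ itself lies in $Q_{h,0}$, so it is an admissible test function in the supremum that defines $\|\cdot\|_{Q^*_{h,m}}$.

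Concretely, I would start from the characterization
$$\|s_h\|_{Q^*_{h,m}} = \sup_{q_h\in Q_{h,0},\, \|q_h\|_{h,m}\neq 0}\frac{(s_h, q_h)}{\|m^{1/2}\nabla q_h\|_{L^2(\Omega)}}$$
and substitute $q_h = s_h \in Q_{h,0}$ to obtain the lower bound
$$\|s_h\|_{Q^*_{h,m}} \geq \frac{\|s_h\|_{L^2(\Omega)}^2}{\|m^{1/2}\nabla s_h\|_{L^2(\Omega)}}.$$
Next, I would use the boundedness of the (constant) mobility $m$ to bound $\|m^{1/2}\nabla s_h\|_{L^2(\Omega)} \leq \sqrt{m}\, \|\nabla s_h\|_{L^2(\Omega)}$, and then apply the standard finite element inverse inequality $\|\nabla s_h\|_{L^2(\Omega)} \leq C_0 h^{-1}\|s_h\|_{L^2(\Omega)}$, valid on shape-regular quasi-uniform meshes for the conforming finite element space $Q_h$. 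Substituting these bounds and rearranging yields the desired estimate with $C_\mathrm{inv} = C_0\sqrt{m}$ (assuming $s_h \not\equiv 0$; the case $s_h\equiv 0$ is trivial).

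There is essentially no hard step here — this is a routine duality/inverse-estimate argument. The only mild subtleties worth mentioning are (i) the admissibility of $s_h$ itself as the test function, which relies precisely on $s_h \in Q_{h,0}$ so that $s_h$ has zero mean, and (ii) the implicit shape-regularity/quasi-uniformity assumption on the underlying mesh, which is standard in this finite element setting and is needed for the inverse estimate on $\nabla s_h$. If desired, one could also absorb the factor $\sqrt{m}$ into $C_\mathrm{inv}$ without further comment since $m$ is assumed constant.
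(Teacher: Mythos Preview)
Your proposal is correct and follows essentially the same approach as the paper: test the dual-norm supremum with $q_h = s_h$ itself to obtain $\|s_h\|_{Q^*_{h,m}} \geq \|s_h\|_{L^2}^2 / \|m^{1/2}\nabla s_h\|_{L^2}$, then apply the standard finite element inverse inequality to bound the gradient term, arriving at $C_{\mathrm{inv}} = \tilde C\sqrt{m}$. The only cosmetic difference is that the paper writes the inverse estimate with the full $H^1$-norm rather than the seminorm, which is immaterial.
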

\begin{proof}
From standard finite element text books, e.g., Theorem 4.5.11 in \cite{brennerFEM}, one can find the inverse inequality
\begin{equation}\label{eq:inverseineq}
    \|s_h\|_{H^1(\Omega)} \leq \tilde{C}h^{-1}\|s_h\|_{L^2(\Omega)},
\end{equation}
for some $\tilde{C}>0$. By the definition of the $Q^*_{h,m}$-norm \eqref{eq:QdualNorm} we have
for $s_h\in Q_{h,0}$ and $\|s_h\|_{h,m}\neq 0$ 
\begin{equation*}
    \|s_h\|_{Q^*_{h,m}} \geq \frac{\left\langle s_h,s_h\right\rangle}{\|m^\frac{1}{2}\nabla s_h\|_{L^2(\Omega)}},
\end{equation*}
which implies
\begin{equation*}
    m^\frac{1}{2}\|s_h\|_{H^1(\Omega)}\|s_h\|_{Q^*_{m,h}} \geq \|s_h\|^2_{L^2(\Omega)}.
\end{equation*}
Using \eqref{eq:inverseineq} we get by choosing $C_\mathrm{inv} = \tilde{C}m^\frac{1}{2}$ the desired inequality
\begin{equation*}
    C_\mathrm{inv}h^{-1}\|s_h\|_{L^2(\Omega)}\|s_h\|_{Q^*_{h,m}} \geq \|s_h\|^2_{L^2(\Omega)}.
\end{equation*}

\end{proof}

\begin{theorem}\label{thm:am}
    The alternating minimization algorithm \eqref{eq:min1}--\eqref{eq:min2} converges linearly in the sense that
    \begin{equation}\label{eq:convrate}
    \mathcal{H}^n_\tau\left(\pf_h^{n,i},\bm u_h^{n,i}\right)-\mathcal{H}^n_\tau\left(\pf_h^{n},\bm u_h^{n}\right)\leq\left(1-\frac{\beta_\mathrm{ch}}{L_\mathrm{ch}}\right)\left(1-\beta_\mathrm{e}\right)
    \left(\mathcal{H}^n_\tau\left(\pf_h^{n,i-1},\bm u_h^{n,i-1}\right)-\mathcal{H}^n_\tau\left(\pf_h^{n},\bm u_h^{n}\right)\right),
\end{equation}
where $\beta_\mathrm{ch} =  \beta_\mathrm{e}= 1-\left(\frac{h^2}{\tau C_\mathrm{inv}^2\xi^2\bm I:\mathbb{C}\bm I}+\frac{\gamma\ell}{C_\Omega^2\xi^2\bm I:\mathbb{C}\bm I}+1\right)^{-1}$, and $L_\mathrm{ch} = 1+ L_\Psi\left(\frac{h^2}{\tau C_\mathrm{inv}^2}+\frac{\gamma\ell}{C_\Omega^2}+\xi^2\bm I:\mathbb{C}\bm I\right)^{-1}$.

\end{theorem}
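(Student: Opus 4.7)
The plan is to cast the proof as an application of the abstract convergence result Lemma~\ref{lem:Jakub}, identifying the three norms required there from the Hessian structure of the convex functional $\mathcal{H}^n_\tau$. Natural candidates, dictated by the convex energy $\mathcal{E}_c$ augmented by the dissipation term, are
\begin{align*}
\|(s_h,\bm w_h)\|^2 &:= \tau^{-1}\|s_h\|^2_{Q^*_{h,m}} + \gamma\ell\|\nabla s_h\|^2_{L^2(\Omega)} + \bigl(\mathbb{C}(\bm\varepsilon(\bm w_h)-\xi s_h\bm I);\bm\varepsilon(\bm w_h)-\xi s_h\bm I\bigr), \\
\|s_h\|^2_{\mathrm{ch}} &:= \tau^{-1}\|s_h\|^2_{Q^*_{h,m}} + \gamma\ell\|\nabla s_h\|^2_{L^2(\Omega)} + \xi^2\bm I\!:\!\mathbb{C}\bm I\,\|s_h\|^2_{L^2(\Omega)}, \\
\|\bm w_h\|^2_{\mathrm{e}} &:= \bigl(\mathbb{C}\bm\varepsilon(\bm w_h);\bm\varepsilon(\bm w_h)\bigr),
\end{align*}
corresponding respectively to the full Hessian of $\mathcal{H}^n_\tau$ (with the $\Psi_c$ contribution dropped) and to its two partial Hessians, with the cross term in the $\|s_h\|_{\mathrm{ch}}$-norm collapsed to its diagonal.

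With these norms fixed, the verification of the hypotheses of Lemma~\ref{lem:Jakub} is almost algebraic. For the convexity condition \eqref{eq:coercive}, I would plug $(s_h^1-s_h^2,\bm w_h^1-\bm w_h^2)$ into the gradient difference $\delta\mathcal{H}^n_\tau(s_h^1,\bm w_h^1)-\delta\mathcal{H}^n_\tau(s_h^2,\bm w_h^2)$; the mixed $-\xi s_h\bm I$ contributions reassemble exactly into the $\mathbb{C}$-quadratic form on $(\bm\varepsilon(\bm w^1-\bm w^2)-\xi(s^1-s^2)\bm I)$ by symmetry of $\mathbb{C}$, the $\Psi_c'$-term is non-negative by convexity, and the remaining contributions are precisely $\|(s_h^1-s_h^2,\bm w_h^1-\bm w_h^2)\|^2$, giving $\sigma=1$. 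The Lipschitz estimate \eqref{eq:lipschitzE} is an equality with $L_{\mathrm{e}}=1$ since $\delta_{\bm u}\mathcal{H}^n_\tau$ is affine in $\bm w_h$. For \eqref{eq:lipschitzCH} every term of the $s_h$-derivative difference is controlled exactly by $\|s_h^1-s_h^2\|^2_{\mathrm{ch}}$, except the $\Psi_c'$-contribution, which by assumption (A1) is bounded by $\tfrac{\gamma}{\ell}L_{\Psi_c}\|s_h^1-s_h^2\|^2_{L^2(\Omega)}$; summing the inverse inequality (Lemma~\ref{lem:inverseinequality}), the Poincaré inequality on $Q_{h,0}$ and the trivial bound from the $\xi^2\bm I\!:\!\mathbb{C}\bm I$ piece yields
\[
\|s_h\|^2_{L^2(\Omega)} \leq \Bigl(\tfrac{h^2}{\tau C_{\mathrm{inv}}^2}+\tfrac{\gamma\ell}{C_\Omega^2}+\xi^2\bm I\!:\!\mathbb{C}\bm I\Bigr)^{-1}\|s_h\|^2_{\mathrm{ch}},
\]
which produces the claimed $L_{\mathrm{ch}}$ upon setting $L_\Psi := \gamma L_{\Psi_c}/\ell$.

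The comparison constants in \eqref{eq:normrelation} are where the coupling via $\xi s_h\bm I$ between the phase-field and the elasticity variables bites, and constitute the main obstacle. For $\beta_{\mathrm{ch}}$ the worst case is $\bm\varepsilon(\bm w_h)=\xi s_h\bm I$, which makes the elastic contribution to $\|(\cdot,\cdot)\|^2$ vanish; the remainder $\tau^{-1}\|s_h\|^2_{Q^*_{h,m}}+\gamma\ell\|\nabla s_h\|^2_{L^2(\Omega)}$ is lifted to a bound on $\xi^2\bm I\!:\!\mathbb{C}\bm I\|s_h\|^2_{L^2(\Omega)}$ by summing the inverse and Poincaré inequalities, and rearranging yields the stated $\beta_{\mathrm{ch}}$. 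For $\beta_{\mathrm{e}}$, one first separates $\|\bm\varepsilon(\bm w_h)\|^2_\mathbb{C}$ from the mixed quantity using weighted Young,
\[
\|\bm\varepsilon(\bm w_h)\|^2_\mathbb{C}\leq (1+\eta)\,\|\bm\varepsilon(\bm w_h)-\xi s_h\bm I\|^2_\mathbb{C}+(1+\eta^{-1})\,\xi^2\bm I\!:\!\mathbb{C}\bm I\,\|s_h\|^2_{L^2(\Omega)},
\]
absorbs the $s_h$-term into $\tau^{-1}\|s_h\|^2_{Q^*_{h,m}}+\gamma\ell\|\nabla s_h\|^2_{L^2(\Omega)}$ via the same lift, and optimises in $\eta$; by symmetry this yields $\beta_{\mathrm{e}}=\beta_{\mathrm{ch}}$. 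Substituting $\sigma=L_{\mathrm{e}}=1$ together with the computed values of $\beta_{\mathrm{ch}},\beta_{\mathrm{e}},L_{\mathrm{ch}}$ into the conclusion of Lemma~\ref{lem:Jakub} produces \eqref{eq:convrate}.
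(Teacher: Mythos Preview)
Your proposal is correct and follows essentially the same approach as the paper: apply Lemma~\ref{lem:Jakub} with exactly the three norms you wrote down, verify $\sigma=1$ and $L_{\mathrm e}=1$ directly, and obtain $L_{\mathrm{ch}}$ by bounding the $\Psi_c'$ contribution in $L^2$ via the inverse inequality, Poincar\'e, and the trivial $\xi^2\bm I\!:\!\mathbb{C}\bm I$ piece. The only difference is cosmetic: for the comparison constants $\beta_{\mathrm{ch}},\beta_{\mathrm e}$ the paper expands $\|(\cdot,\cdot)\|^2$ and applies a single Cauchy--Schwarz/Young estimate to the cross term $2(\mathbb{C}\bm\varepsilon(\bm w_h);\xi s_h\bm I)$ with free parameters $\delta,k_1,k_2,k_3$ (choosing $\delta=1$ for $\beta_{\mathrm{ch}}$ and $\delta=(D+1)^{-1}$ for $\beta_{\mathrm e}$), whereas you drop the non-negative elastic term for $\beta_{\mathrm{ch}}$ and use weighted Young on $\|\bm\varepsilon(\bm w_h)\|_{\mathbb C}^2$ for $\beta_{\mathrm e}$; both routes produce the identical constants and your version is arguably cleaner.
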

\begin{proof}
We apply Lemma~\ref{lem:Jakub}. Let $\mathcal{H} = \mathcal{H}_\tau^n$, $\bar{Q}_h = \bar{Q}_h^n$, and define the norms
\begin{eqnarray*}
    \|(s_h,\bm w_h)\|^2&:=& \frac{\|s_h\|^2_{Q^*_{h,m}}}{\tau} + \gamma\ell\|\nabla s_h\|^2_{L^2(\Omega)} +  \left(\mathbb{C}\left(\bm\varepsilon(\bm w_h) - \xi s_h\bm I\right);\bm\varepsilon(\bm w_h) - \xi s_h\bm I\right),\\
    \|s_h\|_{\mathrm{ch}}^2&:=& \frac{\|s_h\|^2_{Q^*_{h,m}}}{\tau} + \gamma\ell\|\nabla s_h\|^2_{L^2(\Omega)} + \xi^2\bm I:\mathbb{C}\bm I \|s_h\|^2_{L^2(\Omega)},\\
    \|\bm w_h\|^2_\mathrm{e} &:=& \left( \mathbb{C}\bm \varepsilon(\bm w_h);\bm\varepsilon(\bm w_h)\right),
\end{eqnarray*}
for $(s_h,\bm w_h)\in Q_{h,0}\times \bm V_h$.
Notice that $\|(\cdot,\cdot)\|$ and $\|\cdot\|_\mathrm{e}$ are norms due to \eqref{eq:posdefC}.

\vspace{0.2cm}

\noindent
\textit{Relation~\eqref{eq:normrelation} between norms.} We have that for $(s_h, \bm w_h)\in Q_{h,0}\times \bm V_h$
\begin{eqnarray}
    \|(s_h,\bm w_h)\|^2&=&\frac{\|s_h\|^2_{Q^*_{h,m}}}{\tau} + \gamma\ell\|\nabla s_h\|^2_{L^2(\Omega)} +  \left(\mathbb{C}\left(\bm\varepsilon(\bm w_h) \right);\bm\varepsilon(\bm w_h)\right)\label{eq:fullnorm}\\
     &&+ \xi^2\bm I:\mathbb{C}\bm I\|s_h\|^2_{L^2(\Omega)} -2\left(\mathbb{C}\left(\bm\varepsilon(\bm w_h)\right);\xi s_h\bm I \right)\nonumber,
\end{eqnarray}
and by the Cauchy-Schwarz' inequality \eqref{eq:tensorCS} and  Young's inequality on the last term we obtain
\begin{eqnarray*}
  2\left(\mathbb{C}\left(\bm\varepsilon(\bm w_h)\right);\xi s_h\bm I \right)&\leq& \delta\left( \mathbb{C}\bm \varepsilon(\bm w_h);\bm \varepsilon(\bm w_h)\right) + \frac{k_1\xi^2\bm I:\mathbb{C}\bm I}{\delta}\|s_h\|_{L^2(\Omega)}^2\\&& + \frac{k_2\xi^2\bm I:\mathbb{C}\bm I}{\delta}\|s_h\|_{L^2(\Omega)}^2 + \frac{k_3\xi^2\bm I:\mathbb{C}\bm I}{\delta}\|s_h\|_{L^2(\Omega)}^2
\end{eqnarray*}
where $1\geq k_i\geq0$, $k_1+k_2+k_3 = 1$, and $\delta>0$ are free to be chosen. Using Lemma~\ref{lem:inverseinequality} and the Poincar\'e inequality, with constant $C_\Omega$, we get
\begin{eqnarray*}
  2\left(\mathbb{C}\left(\bm\varepsilon(\bm w_h)\right);\xi s_h\bm I \right)&\leq& \delta\left( \mathbb{C}\bm \varepsilon(\bm w_h);\bm \varepsilon(\bm w_h)\right) + \frac{k_1C_{\mathrm{inv}}^2h^{-2}\xi^2\bm I:\mathbb{C}\bm I}{\delta}\|s_h\|_{Q^*_{h,m}}^2 \\&&+\frac{k_2C_\Omega^2\xi^2\bm I:\mathbb{C}\bm I}{\delta}\|\nabla s_h\|_{L^2(\Omega)}^2 +\frac{k_3\xi^2\bm I:\mathbb{C}\bm I}{\delta}\|s_h\|_{L^2(\Omega)}^2.\nonumber
\end{eqnarray*}
Hence, we have from \eqref{eq:fullnorm} that
\begin{eqnarray}\label{eq:doublenormbound}
     \|(s_h,\bm w_h)\|^2 &\geq& (1-\delta)\left( \mathbb{C}\bm \varepsilon(\bm w_h);\bm \varepsilon(\bm w_h)\right)+ \left(\frac{1}{\tau}-  \frac{k_1C_{\mathrm{inv}}^2h^{-2}\xi^2\bm I:\mathbb{C}\bm I}{\delta}\right)\|s_h\|^2_{Q^*_{h,m}}\\&& + \left(\gamma\ell -\frac{k_2C_\Omega^2\xi^2\bm I:\mathbb{C}\bm I}{\delta}\right)\|\nabla s_h\|^2_{L^2(\Omega)} + \left(1-\frac{k_3}{\delta}\right)\xi^2\bm I:\mathbb{C}\bm I\|s_h\|^2_{L^2(\Omega)}.\nonumber
\end{eqnarray}
Choosing $\delta=1$, $\beta_\mathrm{ch} = 1-\left(\frac{h^2}{\tau C_\mathrm{inv}^2\xi^2\bm I:\mathbb{C}\bm I}+\frac{\gamma\ell}{C_\Omega^2\xi^2\bm I:\mathbb{C}\bm I}+1\right)^{-1}$, $k_1 = (1-\beta_\mathrm{ch})\frac{h^2}{\tau C_\mathrm{inv}^2\xi^2\bm I:\mathbb{C}\bm I}$, $k_2 =(1-\beta_\mathrm{ch})\frac{\gamma\ell}{C_\Omega^2\xi^2\bm I:\mathbb{C}\bm I}$, and $k_3 = 1-\beta_\mathrm{ch}$ we get the desired bound
\begin{equation*}
     \|(s_h,\bm w_h)\|^2 \geq \beta_\mathrm{ch}\|s_h\|^2_\mathrm{ch},\quad \forall (s_h,\bm w_h) \in Q_{h,0}\times \bm V_h.
\end{equation*}
Choosing now $\delta = \left(\frac{h^2}{\tau C_\mathrm{inv}^2\xi^2\bm I:\mathbb{C}\bm I}+\frac{\gamma\ell}{C_\Omega^2\xi^2\bm I:\mathbb{C}\bm I}+1\right)^{-1}$, $k_1 = \frac{\delta h^2}{\tau C_\mathrm{inv}^2\xi^2\bm I:\mathbb{C}\bm I}$, $k_2 = \frac{\gamma\ell\delta}{C_\Omega^2\xi^2\bm I:\mathcal{C}\bm I}$, $k_3 = \delta$, and  $\beta_\mathrm{e}= 1-\delta$ in equation~\eqref{eq:doublenormbound} we obtain
\begin{equation*}
     \|(s_h,\bm w_h)\|^2 \geq \beta_\mathrm{e}\|\bm w_h\|^2_\mathrm{e},\quad \forall (s_h,\bm w_h) \in Q_{h,0}\times \bm V_h.
\end{equation*}

\vspace{0.2cm}

\noindent
\textit{Strong convexity.}
By assumption (A2)
\begin{eqnarray*}
 &\left\langle \delta \mathcal{H}^n_\tau(s_h^1,\bm w_h^1)-\delta \mathcal{H}^n_\tau(s_h^2,\bm w_h^2),\left(s_h^1-s_h^2, \bm w_h^1-\bm w_h^2\right)\right\rangle\\
 &=  \left\langle \delta_\pf \mathcal{H}^n_\tau(s_h^1,\bm w_h^1)-\delta_\pf \mathcal{H}^n_\tau(s_h^2,\bm w_h^2),s_h^1-s_h^2\right\rangle +  \left\langle \delta_{\bm u} \mathcal{H}^n_\tau(s_h^1,\bm w_h^1)-\delta_{\bm u} \mathcal{H}^n_\tau(s_h^2,\bm w_h^2),\bm w_h^1-\bm w_h^2\right\rangle\\
 &= \left\|\left(s_h^1-s_h^2,\bm w_h^1-\bm w_h^2\right)\right\|^2 + \frac{\gamma}{\ell}\left( \Psi_c'(s_h^1)-\Psi_c'(s_h^2),s_h^1-s_h^2\right)\\
 &\geq \left\|\left(s_h^1-s_h^2,\bm w_h^1-\bm w_h^2\right)\right\|^2,
\end{eqnarray*}
for all $ \left(s_h^1, s_h^2, \bm w_h^1, \bm w_h^2\right)\in \bar{Q}_{h}^n\times \bar{Q}_{h}^n \times \bm V_h\times \bm V_h,$ we have that $\mathcal{H}_\tau^n(s_h,\bm w_h)$ is convex in $\|(s_h,\bm w_h)\|$ with convexity constant $\sigma = 1$.

\vspace{0.2cm}

\noindent
\textit{Lipschitz continuity of the partial gradients.} We have 
\begin{eqnarray*}
    \left\langle\delta_\pf \mathcal{H}_\tau^n(s_h^1,\bm w_h)- \delta_\pf \mathcal{H}_\tau^n(s_h^2,\bm w_h),s_h^1-s_h^2\right\rangle = \|s_h^1-s_h^2\|^2_ \mathrm{ch} + \frac{\gamma}{\ell}\left( \Psi_c'(s_h^1)-\Psi_c'(s_h^2),s_h^1-s_h^2\right)
\end{eqnarray*}
for all $\left(s_h^1, s_h^2, \bm w_h\right)\in \bar{Q}^n_h\times \bar{Q}^n_h\times \bm  V_h$. Assumption (A2) gives
\begin{equation*}
    \left( \Psi_c'(s_h^1)-\Psi_c'(s_h^2),s_h^1-s_h^2\right)
    \leq L_{\Psi_c}\|s_h^1-s_h^2\|^2_{L^2(\Omega)}
\end{equation*}
and by Lemma~\ref{lem:inverseinequality} we get
\begin{eqnarray*}
    \left\langle\delta_\pf \mathcal{H}_\tau^n(s_h^1,\bm w_h)- \delta_\pf \mathcal{H}_\tau^n(s_h^2,\bm w_h),s_h^1-s_h^2\right\rangle \leq L_\mathrm{ch}\|s_h^1-s_h^2\|^2_\mathrm{ch},
\end{eqnarray*}
where $L_\mathrm{ch} = 1+ L_\Psi\left(\frac{h^2}{\tau C_\mathrm{inv}^2}+\frac{\gamma\ell}{C_\Omega^2}+\xi^2\bm I:\mathbb{C}\bm I\right)^{-1}$. Finally, $\delta_{\bm u}\mathcal{H}^n_\tau$ is Lipschitz continuous with respect to $\|\cdot\|_\mathrm{e}$ with constant $L_\mathrm{e}=1$, since
\begin{equation*}
      \left\langle\delta_{\bm u} \mathcal{H}_\tau^n(s_h,\bm w_h^1)- \delta_{\bm u} \mathcal{H}_\tau^n(s_h,\bm w_h^2),\bm w_h^1-\bm w_h^2\right\rangle = \|\bm w_h^1-\bm w_h^2\|^2_ \mathrm{e},\quad\forall \left(s_h,\bm w_h^1,\bm w_h^2\right)\in \bar{Q}^n_h\times \bm V_h, \times\bm V_h,
\end{equation*}
and the convergence result \eqref{eq:convrate} is obtained through Lemma~\ref{lem:Jakub}.
\end{proof}

\subsection{Solution strategy for the Cahn-Larch\'e equations with phase-field-dependent elasticity tensor}\label{sec:hetero}
When the elasticity tensor depends on the phase-field, $\mathbb{C}(\pf)$, the situation is slightly more involved because a naive implicit discretization, using the convex-concave splitting of the double-well potential $\Psi$ leads to a discrete system that is related to a nonconvex minimization problem (similar treatment as in Proposition~\ref{prop:minimization}). It reads:  Given $\pf^{n-1}_h\in Q_h$, find $\pf^n_h, \mu_h^n \in Q_h$ and $\bm u^n_h \in \bm V_h$, such that 
\begin{eqnarray}\label{eq:weakimpch1}
\left( \frac{\pf^n_h-\pf^{n-1}_h}{\tau},q^\pf_h \right) + \left( m \nabla\mu_h^n,\nabla q^\pf_h\right) -\left(  R^n ,q_h^\pf\right)&=&0,  \\\label{eq:weakimpch2}
 \left( \mu_h^n,q_h^\mu\right) - \gamma\ell\left( \nabla\pf_h^n,\nabla q_h^\mu\right) - \frac{\gamma}{\ell}\left(  \Psi_c'(\pf_h^n) - \Psi_e'(\pf_h^{n-1}), q_h^\mu\right)  -\left( \delta_\pf\mathcal{E}_\mathrm{e}(\pf^n_h,\bm u^n_h), q_h^\mu\right) &=& 0, \\
  \left( \mathbb{C}\left(\pf_h^n\right)\big(\bm\varepsilon(\bm u_h^n)- \xi\pf_h^n\bm I) \big);\bm\varepsilon(\bm v_h)\right) - \left( \bm f^n,\bm v_h\right) &=& 0, \label{eq:weakimpelastic}
\end{eqnarray}
for all $(q^\pf_h, q^\mu_h,\bm v_h)\in Q_h\times Q_h\times \bm V_h$ with $\delta_\pf\mathcal{E}_\mathrm{e}(\pf_h^n,\bm u_h^n)$ from \eqref{eq:elasticenergyderivative}. To mitigate the nonconvexity of the related minimization problem one could evaluate the entire term related to the elastic energy explicitly, $\delta_\pf\mathcal{E}_\mathrm{e}(\pf^{n-1}_h,\bm u^{n-1}_h)$. Then one could show, using the same technique as in Theorem~\ref{thm:am} that an alternating minimization type method would converge. Instead, we propose a semi-implicit evaluation of the term $\delta_\pf\mathcal{E}_\mathrm{e}(\cdot,\cdot)$, which corresponds to a convex minimization problem. The discretization reads:
Given $\pf^{n-1}_h\in Q_h$, find $\pf^n_h, \mu_h^n \in Q_h$ and $\bm u^n_h \in V_h$, such that 
\begin{eqnarray}
\left( \frac{\pf^n_h-\pf^{n-1}_h}{\tau},q^\pf_h \right) + \left( m \nabla\mu_h^n,\nabla q^\pf_h\right) -\left(  R^n ,q_h^\pf\right)&=&0, \label{eq:weakch1} \\
 \left( \mu_h^n,q_h^\mu\right) - \gamma\ell\left( \nabla\pf_h^n,\nabla q_h^\mu\right)-\frac{\gamma}{\ell}\left(  \Psi_c'(\pf_h^n) - \Psi_e'(\pf_h^{n-1}), q_h^\mu\right)  -\left( \mathcal{E}_\mathrm{e,\pf}^\mathrm{si}(\pf^n_h,\bm u^n_h; \pf_h^{n-1},\bm u_h^{n-1}), q_h^\mu\right) &=& 0, \label{eq:weakch2}\\
  \left( \mathbb{C}\left(\pf_h^{n-1}\right)\big(\bm\varepsilon(\bm u_h^n)- \xi\pf_h^n\bm I) \big);\bm\varepsilon(\bm v_h)\right) - \left( \bm f^n,\bm v_h\right) &=& 0, \label{eq:weakelastic}
\end{eqnarray}
for all $(q^\pf_h, q^\mu_h,\bm v_h)\in Q_h\times Q_h\times \bm V_h$ where 
\begin{eqnarray*}
   \mathcal{E}_\mathrm{e,\pf}^\mathrm{si}(\pf^n_h,\bm u^n_h; \pf_h^{n-1},\bm u_h^{n-1}) &:=& \frac{1}{2}\left(\bm\varepsilon\left(\bm u^{n-1}_h\right)-\xi\pf^{n-1}_h\bm I\right)\mathbb{C}'\left(\pf^{n-1}_h\right)\left(\bm \varepsilon\left(\bm u_h^{n-1}\right)-\xi\pf_h^{n-1}\bm I\right)\\
   &&-\xi\bm I:\mathbb{C}(\pf^{n-1}_h)\left(\bm\varepsilon(\bm u_h^n)-\xi\pf^{n}_h\bm I\right).
\end{eqnarray*}
Notice here, that
\begin{equation*}
     \delta_\pf\mathcal{E}_\mathrm{e}(\pf,\bm u)  =  \mathcal{E}_\mathrm{e,\pf}^\mathrm{si}(\pf,\bm u, \pf,\bm u).
\end{equation*}
Analogous to Proposition~\ref{prop:minimization} we can prove that \eqref{eq:weakch1}--\eqref{eq:weakelastic} is related to a minimization problem.
\begin{prop} 
The solution to the discrete system of equation \eqref{eq:weakch1}--\eqref{eq:weakelastic} are equivalent to the solution of the minimization problem: Given $\pf^{n-1}_h, \bm u^{n-1}_h\in Q_h\times\bm V_h$ solve
\begin{eqnarray}\label{eq:minimizationhetero}
    (\pf^n_h,\bm u^n_h) = \argmin_{s_h\in \bar{Q}^n_h, \bm w_h\in \bm V_h} \mathcal{F}^{n}_\tau(s_h, \bm w_h)
\end{eqnarray}
for 
\begin{eqnarray*}
    \mathcal{F}^n_\tau(s_h,\bm w_h) &:=& \dfrac{\|s_h-\varphi_h^{n-1} - \tau R^n\|^2_{Q_{h,m}^*}}{2\tau} + \mathcal{E}^\mathrm{c}_c(s_h,\bm w_h, \pf_h^{n-1}) -\left( \mathcal{E}^\mathrm{e}_\mathrm{e}(\pf_h^{n-1}, \bm u_h^{n-1}),s_h\right)\\
    &&- \frac{\gamma}{\ell}\left(\Psi_e'(\pf^{n-1}_h),s_h\right)  - \left( \bm f^n, \bm w_h \right),
\end{eqnarray*}
where
\begin{equation*}
    \mathcal{E}_c^c(s_h,\bm w_h, \pf^{n-1}_h) := \int_\Omega \frac{\gamma}{\ell}\Psi_c(s_h) + \gamma\ell \frac{|\nabla s_h|^2}{2} + \frac{1}{2}\left(\bm \varepsilon(\bm w_h) - \xi s_h \bm I\right):\mathbb{C}\left(\pf^{n-1}_h\right)\left(\bm \varepsilon(\bm w_h) - \xi s_h \bm I\right)\; dx,
\end{equation*}
and
\begin{equation*}
    \mathcal{E}_e^e(\varphi^{n-1}_h,\bm u^{n-1}_h) :=  \frac{1}{2}\left(\bm\varepsilon\left(\bm u^{n-1}_h\right)-\xi\pf^{n-1}_h\bm I\right)\mathbb{C}'\left(\pf^{n-1}_h\right)\left(\bm \varepsilon\left(\bm u_h^{n-1}\right)-\xi\pf_h^{n-1}\bm I\right).
\end{equation*}
\end{prop}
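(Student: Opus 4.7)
The argument is a direct adaptation of the proof of Proposition~\ref{prop:minimization}; since $\mathbb{C}(\pf_h^{n-1})$ depends only on data from the previous time step, the functional $\mathcal{E}_c^c(\cdot,\cdot,\pf_h^{n-1})$ remains a genuine convex quadratic in $(s_h,\bm w_h)$, and the overall structure of $\mathcal{F}_\tau^n$ (dual-norm dissipation plus a strictly convex part plus linear perturbations in old data) matches that of $\mathcal{H}_\tau^n$. I would proceed in three steps: compute the variational derivatives of $\mathcal{F}_\tau^n$, introduce $\mu_h^n$ through the dual pairing~\eqref{eq:duality-h1}, and upgrade the test-function space from $Q_{h,0}$ to $Q_h$ via the affine structure of $\bar{Q}_h^n$ together with the canonical-extension argument of Remark~\ref{rem:canonical}.

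The derivative $\delta_{\bm u}\mathcal{F}_\tau^n$ is immediate: the only $\bm w_h$-dependent terms are the elastic part of $\mathcal{E}_c^c$ and $-(\bm f^n,\bm w_h)$, and their derivatives recover~\eqref{eq:weakelastic} directly, with $\mathbb{C}(\pf_h^{n-1})$ playing the role that $\mathbb{C}$ played in Proposition~\ref{prop:minimization}. The phase-field derivative $\delta_\pf\mathcal{F}_\tau^n$ is the more interesting one: differentiating the quadratic elastic contribution inside $\mathcal{E}_c^c$ in direction $q_h$ produces the implicit coupling $-\xi\bm I:\mathbb{C}(\pf_h^{n-1})\bigl(\bm\varepsilon(\bm u_h^n)-\xi\pf_h^n\bm I\bigr)$, while the explicit linear term in $\mathcal{F}_\tau^n$ involving $\mathcal{E}_e^e(\pf_h^{n-1},\bm u_h^{n-1})$ contributes the corresponding frozen quadratic after differentiation in $s_h$. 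These two contributions reassemble precisely to the semi-implicit coupling $\mathcal{E}_\mathrm{e,\pf}^\mathrm{si}(\pf_h^n,\bm u_h^n;\pf_h^{n-1},\bm u_h^{n-1})$ appearing in~\eqref{eq:weakch2}; this is exactly the motivation for the particular explicit/implicit split chosen in the definition of $\mathcal{F}_\tau^n$.

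Next, I would introduce $\mu_h^n$ through the dual pairing as in the proof of Proposition~\ref{prop:minimization}: the derivative of $\|\cdot\|_{Q_{h,m}^*}^2/(2\tau)$ against $q_h\in Q_{h,0}$ equals $-(\mu_h^n,q_h)$ provided $\mu_h^n\in Q_h$ is defined by $(m\nabla\mu_h^n,\nabla l_h) = -\bigl((\pf_h^n-\pf_h^{n-1})/\tau - R^n,l_h\bigr)$ for all $l_h\in Q_{h,0}$, which is the restriction of~\eqref{eq:weakch1} to $Q_{h,0}$. Fixing the mean of $\mu_h^n$ so that the remaining volumetric residual in the phase-field optimality condition integrates to zero then upgrades that identity from $Q_{h,0}$ to all of $Q_h$, while the admissibility constraint $\pf_h^n\in\bar{Q}_h^n$ upgrades~\eqref{eq:weakch1} to $Q_h$ analogously. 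The converse implication (that solutions of~\eqref{eq:weakch1}--\eqref{eq:weakelastic} are minimizers of~\eqref{eq:minimizationhetero}) follows by running the argument backwards and appealing to the convexity of $\mathcal{F}_\tau^n$ to conclude that critical points are global minima. There is no essentially new obstacle compared to Proposition~\ref{prop:minimization}; the only point requiring a little care is to confirm that $\mathcal{F}_\tau^n$ really is convex in $(s_h,\bm w_h)$ over $\bar{Q}_h^n\times\bm V_h$, but this is exactly where the frozen coefficient $\mathbb{C}(\pf_h^{n-1})$ and assumption~(A2) do their job, keeping $\mathcal{E}_c^c$ a genuine convex quadratic so that the dual-pairing and canonical-extension machinery of Proposition~\ref{prop:minimization} applies verbatim.
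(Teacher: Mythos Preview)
Your proposal is correct and takes essentially the same approach as the paper, which in fact provides no explicit proof and simply states that the result is analogous to Proposition~\ref{prop:minimization}. You have correctly identified that freezing $\mathbb{C}(\pf_h^{n-1})$ makes $\mathcal{E}_c^c$ a convex quadratic in $(s_h,\bm w_h)$, so the dual-pairing and canonical-extension machinery of Proposition~\ref{prop:minimization} carries over verbatim, and that the split of the elastic derivative into the implicit piece $-\xi\bm I{:}\mathbb{C}(\pf_h^{n-1})(\bm\varepsilon(\bm u_h^n)-\xi\pf_h^n\bm I)$ coming from $\mathcal{E}_c^c$ and the explicit linear piece involving $\mathcal{E}_e^e$ is exactly what reconstitutes $\mathcal{E}_{\mathrm{e},\pf}^{\mathrm{si}}$ in the optimality condition.
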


\subsubsection{Alternating minimization for Cahn-Larch\'e with phase-field-dependent elasticity tensor}
Similarly to Section~\ref{sec:am-homo} we propose an alternating minimization algorithm, which again naturally is formulated as a block Gauss-Seidel method, to solve the discrete system of equations \eqref{eq:weakch1}--\eqref{eq:weakelastic}.  Given $(\pf_h^{n-1}, \bm u_h^{n-1}, \bm u_h^{n,i-1})\in Q_h\times \bm V_h\times \bm V_h$, find $(\pf_h^{n,i}, \mu_h^{n,i},\bm u_h^{n,i})\in Q_h\times Q_h\times \bm V_h$ such that
\begin{eqnarray}
\left( \frac{\pf^{n,i}_h-\pf^{n-1}_h}{\tau},q^\pf_h \right) + \left( m \nabla\mu_h^{n,i},\nabla q^\pf_h\right) -\left(  R^n ,q_h^\pf\right)&=&0, \label{eq:splitweakch1} \\
 \left( \mu_h^{n,i},q_h^\mu\right) - \gamma\ell\left( \nabla\pf_h^{n,i},\nabla q_h^\mu\right) - \frac{\gamma}{\ell}\left(  \Psi_c'\left(\pf_h^{n,i}\right) - \Psi_e'\left(\pf_h^{n-1}\right), q_h^\mu\right)  \nonumber\\
 +\left(\mathcal{E}_\mathrm{e,\pf}^\mathrm{si}\left(\pf^{n,i}_h,\bm u^{n,i-1}_h, \pf_h^{n-1},\bm u_h^{n-1}\right),q_h^\mu\right) &=& 0, \label{eq:splitweakch2}\\
  \left( \mathbb{C}\left(\pf_h^{n-1}\right)\left(\bm\varepsilon\left(\bm u_h^{n,i}\right)- \xi\pf_h^{n,i}\bm I \right);\bm\varepsilon(\bm v_h)\right) - \left( \bm f^n,\bm v_h\right) &=& 0,\label{eq:splitweakelasticity}
\end{eqnarray}
for all $(q^\pf_h, q^\mu_h,\bm v_h)\in Q_h\times Q_h\times \bm V_h$. 
\begin{corollary}\label{cor:am}
The alternating minimization decoupling scheme \eqref{eq:splitweakch1}--\eqref{eq:splitweakelasticity} converges in each time-step $n$, with convergence rate
    \begin{equation}\label{eq:convratehetero}
    \mathcal{F}^n_\tau\left(\pf_h^{n,i},\bm u_h^{n,i}\right)-\mathcal{F}^n_\tau\left(\pf_h^{n},\bm u_h^{n}\right)\leq\left(1-\frac{\beta_\mathrm{ch}}{L_\mathrm{ch}}\right)\left(1-\beta_\mathrm{e}\right)
    \left(\mathcal{F}^n_\tau\left(\pf_h^{n,i-1},\bm u_h^{n,i-1}\right)-\mathcal{F}^n_\tau\left(\pf_h^{n},\bm u_h^{n}\right)\right),
\end{equation}
where $\beta_\mathrm{ch} =  \beta_\mathrm{e}= 1-\left(\frac{h^2}{\tau C_\mathrm{inv}^2\xi^2\bm I:\bm IC_\mathbb{C}}+\frac{\gamma\ell}{C_\Omega^2\xi^2\bm I:\bm I C_\mathbb{C}}+1\right)^{-1}$, and $L_\mathrm{ch} = 1+ L_\Psi\left(\frac{h^2}{\tau C_\mathrm{inv}^2}+\frac{\gamma\ell}{C_\Omega^2}+\xi^2\bm I:\bm I c_\mathbb{C}\right)^{-1}$.
\end{corollary}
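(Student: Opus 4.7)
The plan is to mirror the proof of Theorem~\ref{thm:am} essentially step by step, with the elasticity tensor $\mathbb{C}$ replaced by the phase-field-dependent but time-frozen tensor $\mathbb{C}(\pf_h^{n-1})$. Concretely, I would invoke Lemma~\ref{lem:Jakub} with $\mathcal{H}=\mathcal{F}_\tau^n$, $\bar{Q}_h=\bar{Q}_h^n$, and the natural analogues of the three norms from Theorem~\ref{thm:am}:
\begin{eqnarray*}
    \|(s_h,\bm w_h)\|^2 &:=& \frac{\|s_h\|^2_{Q^*_{h,m}}}{\tau} + \gamma\ell\|\nabla s_h\|^2_{L^2(\Omega)} + \bigl(\mathbb{C}(\pf_h^{n-1})\bigl(\bm\varepsilon(\bm w_h) - \xi s_h\bm I\bigr);\bm\varepsilon(\bm w_h) - \xi s_h\bm I\bigr),\\
    \|s_h\|^2_\mathrm{ch} &:=& \frac{\|s_h\|^2_{Q^*_{h,m}}}{\tau} + \gamma\ell\|\nabla s_h\|^2_{L^2(\Omega)} + \xi^2\,\bm I:\bm I\, c_\mathbb{C}\,\|s_h\|^2_{L^2(\Omega)},\\
    \|\bm w_h\|^2_\mathrm{e} &:=& \bigl(\mathbb{C}(\pf_h^{n-1})\bm\varepsilon(\bm w_h);\bm\varepsilon(\bm w_h)\bigr),
\end{eqnarray*}
each of which is a genuine norm by assumption (A2).

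The first step is to observe that $\mathcal{F}_\tau^n$ differs from $\mathcal{H}_\tau^n$ of Proposition~\ref{prop:minimization} only through the spatial variability of the elasticity tensor and through the extra affine term $-(\mathcal{E}_e^e(\pf_h^{n-1},\bm u_h^{n-1}),s_h)$. Since the hypotheses of Lemma~\ref{lem:Jakub} only involve differences of gradients $\delta\mathcal{F}_\tau^n(s_h^1,\bm w_h^1)-\delta\mathcal{F}_\tau^n(s_h^2,\bm w_h^2)$, this affine contribution drops out, so strong convexity with $\sigma=1$ follows verbatim from Theorem~\ref{thm:am}: the gradient-difference equals $\|(s_h^1-s_h^2,\bm w_h^1-\bm w_h^2)\|^2$ plus the non-negative term $\frac{\gamma}{\ell}(\Psi_c'(s_h^1)-\Psi_c'(s_h^2),s_h^1-s_h^2)$. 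Lipschitz continuity of $\delta_{\bm u}\mathcal{F}_\tau^n$ in $\|\cdot\|_\mathrm{e}$ is then immediate with $L_\mathrm{e}=1$, and Lipschitz continuity of $\delta_\pf\mathcal{F}_\tau^n$ in $\|\cdot\|_\mathrm{ch}$ is obtained exactly as before, using (A1), Lemma~\ref{lem:inverseinequality} and Poincar\'e to absorb $\|s_h^1-s_h^2\|^2_{L^2(\Omega)}$ into $\|\cdot\|^2_\mathrm{ch}$. Here the lower bound $c_\mathbb{C}$ enters through the mass coefficient in $\|\cdot\|_\mathrm{ch}$ itself and produces the stated $L_\mathrm{ch}$.

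Finally, for the norm relation \eqref{eq:normrelation}, I would apply the Cauchy-Schwarz-type inequality \eqref{eq:tensorCS} and Young's inequality to the cross term $2(\mathbb{C}(\pf_h^{n-1})\bm\varepsilon(\bm w_h);\xi s_h\bm I)$ with a free parameter $\delta>0$, bound $(\mathbb{C}(\pf_h^{n-1})\xi s_h\bm I;\xi s_h\bm I)$ from above by $C_\mathbb{C}\,\xi^2\bm I:\bm I\,\|s_h\|^2_{L^2(\Omega)}$ via (A2), split the resulting mass term by $k_1+k_2+k_3=1$, and absorb each piece into the $Q^*_{h,m}$, gradient, and $L^2$ parts of $\|s_h\|^2_\mathrm{ch}$ through Lemma~\ref{lem:inverseinequality} and Poincar\'e's inequality. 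The two calibrations of $(\delta,k_1,k_2,k_3)$ used in Theorem~\ref{thm:am} then yield $\beta_\mathrm{ch}=\beta_\mathrm{e}$ in the stated form, with the upper bound $C_\mathbb{C}$ appearing precisely because it is the constant entering the absorption step, and Lemma~\ref{lem:Jakub} delivers \eqref{eq:convratehetero}. The only genuinely new difficulty compared to Theorem~\ref{thm:am} is the bookkeeping of which of $c_\mathbb{C}$ or $C_\mathbb{C}$ should appear at each step, dictated by whether $\mathbb{C}(\pf_h^{n-1})$ is being bounded above (for norm-relation absorption) or below (inside the definition of $\|\cdot\|_\mathrm{ch}$); the remainder of the argument is a structural replay of Theorem~\ref{thm:am}.
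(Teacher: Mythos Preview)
Your approach is exactly the paper's: replay Theorem~\ref{thm:am} with $\mathbb{C}$ replaced by $\mathbb{C}(\pf_h^{n-1})$ and invoke (A2) wherever a scalar bound is needed. The structural reasoning (affine term drops from gradient differences, $\sigma=1$ convexity, $L_\mathrm{e}=1$, cross-term absorption via Cauchy--Schwarz/Young plus Lemma~\ref{lem:inverseinequality} and Poincar\'e) is correct.

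There is one small slip in your choice of $\|\cdot\|_\mathrm{ch}$. If you freeze the mass coefficient at the constant $c_\mathbb{C}$, then the partial gradient difference in $\pf$ contains the weighted term $\xi^2\bigl(\mathbb{C}(\pf_h^{n-1})(s_h^1-s_h^2)\bm I;(s_h^1-s_h^2)\bm I\bigr)$, which is bounded above by $C_\mathbb{C}\xi^2\bm I{:}\bm I\|s_h^1-s_h^2\|_{L^2}^2$, not $c_\mathbb{C}\xi^2\bm I{:}\bm I\|s_h^1-s_h^2\|_{L^2}^2$. You therefore pick up an extra $(C_\mathbb{C}-c_\mathbb{C})\xi^2\bm I{:}\bm I\|\cdot\|_{L^2}^2$ term before absorbing, and the resulting Lipschitz constant is not the stated $L_\mathrm{ch}$. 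The clean fix, and what the paper's ``replace $\mathbb{C}$ with $\mathbb{C}(\pf_h^{n-1})$'' actually means, is to keep the full weighted term $\xi^2\bigl(\mathbb{C}(\pf_h^{n-1})s_h\bm I;s_h\bm I\bigr)$ in the definition of $\|\cdot\|_\mathrm{ch}$. Then the gradient difference equals $\|s_h^1-s_h^2\|_\mathrm{ch}^2$ plus the $\Psi_c'$ term exactly as in Theorem~\ref{thm:am}, and (A2) enters only when you compare $\|\cdot\|_\mathrm{ch}$ to $\|\cdot\|_{L^2}$: the lower bound $c_\mathbb{C}$ for the Lipschitz estimate, the upper bound $C_\mathbb{C}$ for the norm-relation absorption. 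This yields precisely the constants in the statement.
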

\begin{proof}
This proof is analogous that of Theorem~\ref{thm:am}. Simply replace $\mathbb{C}$ with $\mathbb{C}(\pf_h^{n-1})$ and apply the bounds from assumption (A2).
\end{proof}
\begin{remark}
Notice that as the discrete system of equations \eqref{eq:weakch1}--\eqref{eq:weakelastic} corresponds to a convex minimization problem, we also expect a Newton-type solver to be rather robust, and have a higher convergence rate than the alternating minimization method. 
\end{remark}

\section{Numerical experiments}\label{sec:numerics}
In this section, we present experiments to numerically investigate the performance and robustness of both the Newton method and alternating minimization applied to the semi-implicit time-discretized Cahn-Larch\'e equations \eqref{eq:weakch1}--\eqref{eq:weakelastic} compared with applying them to the implicit-in-time discretizaton \eqref{eq:weakimpch1}--\eqref{eq:weakimpelastic}. In all numerical experiments, the unit square in two spatial dimensions with a quadrilateral mesh is considered, and we apply bilinear conforming finite elements to all subproblems; phase-field, potential, and displacement.

When the elasticity tensor depends on the phase-field it is through the $C^1$ interpolation function 
\begin{equation}
    \pi(\pf) = 
    \begin{cases} 
    0,\quad &\pf<-1\\
    \frac{1}{4}\left(-\pf^3 + 3\pf +2 \right), \quad &\pf\in [-1,1]\\
    1,\quad &\pf>1
    \end{cases},
\end{equation}
and the relation $\mathbb{C}(\pf) = \mathbb{C}_{-1} + \pi(\pf)\left(\mathbb{C}_{1}-\mathbb{C}_{-1}\right)$, where $\mathbb{C}_{-1}$ and $\mathbb{C}_{1}$ are the elasticity tensors corresponding to the pure phases at $\varphi = -1$ and $\varphi=1$, respectively.

Four different solution strategies to the Cahn-Larch\'e equations are tested. For the discrete system \eqref{eq:weakimpch1}--\eqref{eq:weakimpelastic} we test both the monolithic Newton method (marked by "Imp. Mono." in figure legends) and a staggered solution scheme, solving the Cahn-Hilliard subsystem \eqref{eq:weakimpch1}--\eqref{eq:weakimpch2} and the elasticity subsystem \eqref{eq:weakimpelastic} sequentially (marked by "Imp. Split." in figure legends). The same is done for the discrete system \eqref{eq:weakch1}--\eqref{eq:weakelastic} and mark the monolithic Newton method as "Semi-Imp. Mono." and the alternating minimization method \eqref{eq:splitweakch1}--\eqref{eq:splitweakelasticity} as "Semi-Imp. Split.". For both the monolithic and the decoupling solvers, the iterative procedures are terminated when the absolute and relative residuals and increments (iteration $i-1$ subtracted from iteration $i$), in the $L^2(\Omega)$-norm, reach a prescribed tolerance, i.e.,
\begin{eqnarray*}
\left\|\mathrm{Res}\left(\varphi_h^{n,i},\mu_h^{n,i}, \bm u_h^{n,i}\right)\right\|_{2}&\leq& \mathrm{Tol}_{\mathrm{res},\mathrm{abs}},\\
\frac{\left\|\mathrm{Res}\left(\varphi_h^{n,i},\mu_h^{n,i}, \bm u_h^{n,i}\right)\right\|_{2}}{\left\|\mathrm{Res}\left(\varphi_h^{n,0},\mu_h^{n,0}, \bm u_h^{n,0}\right)\right\|_{2}}&\leq& \mathrm{Tol}_{\mathrm{res},\mathrm{rel}},\\
\left\|\varphi_h^{n,i}-\varphi_h^{n,i-1}\right\|_{L^2(\Omega)} + \left\|\mu_h^{n,i}-\mu_h^{n,i-1}\right\|_{L^2(\Omega)}+\left\|\bm u_h^{n,i}-\bm u_h^{n,i-1}\right\|_{L^2(\Omega)}&\leq& \mathrm{Tol}_{\mathrm{inc},\mathrm{abs}},\\
\frac{\left\|\varphi_h^{n,i}-\varphi_h^{n,i-1}\right\|_{L^2(\Omega)}}{\left\|\varphi_h^{n,1}-\varphi_h^{n,0}\right\|_{L^2(\Omega)}} + \frac{\left\|\mu_h^{n,i}-\mu_h^{n,i-1}\right\|_{L^2(\Omega)}}{\left\|\mu_h^{n,1}-\mu_h^{n,0}\right\|_{L^2(\Omega)}}+ \frac{\left\|\bm u_h^{n,i}-\bm u_h^{n,i-1}\right\|_{L^2(\Omega)}}{\left\|\bm u_h^{n,1}-\bm u_h^{n,0}\right\|_{L^2(\Omega)}}&\leq& \mathrm{Tol}_{\mathrm{inc},\mathrm{rel}},
\end{eqnarray*}
where $\mathrm{Res}\left(\varphi_h^{n,i},\mu_h^{n,i}, \bm u_h^{n,i}\right)$ is the algebraic residual corresponding to the discretized system of equations.
For all test cases that we run in this paper, $\mathrm{Tol}_{\mathrm{res},\mathrm{abs}}$, $\mathrm{Tol}_{\mathrm{res},\mathrm{rel}}$, $\mathrm{Tol}_{\mathrm{inc},\mathrm{abs}}$, and $\mathrm{Tol}_{\mathrm{inc},\mathrm{rel}}$ are set to $1e-6$.  Moreover, the parameter $\theta$ in the modification to the standard double-well potential and the related convex-concave splitting, see Assumption (A1), is chosen as $\theta=2$.
\begin{remark}
The Cahn-Hilliard subproblem is nonlinear even though the alternating minimization method is applied. We use the Newton method and iterate until similar tolerances as for the full problem are reached ($1e-6$). One could, however, consider to only perform a single iteration of the Newton method in each alternating minimization iteration instead of iterating until the prescribed tolerance is reached, as done in \cite{illiano2021iterative}, in order to speed up the convergence of the total iterative solver.
\end{remark}

\subsection{Test case with phases separated along the middle}
In this test case we initialize the simulation by separating the phases along the middle of the domain, see Figure~\ref{fig:midsplitsg0}. We take $\bm u_h^{0,0}=0$ as initial guess for displacement in the first time step and impose zero Dirichlet boundary conditions for it on the entire boundary. The model parameters can be found in Table~\ref{tab:1}, with 
\begin{equation*}
    \mathbb{C}_{-1} = 
    \begin{pmatrix}
        100 & 20  & 0\\
        20  & 100 & 0\\
        0   & 0   & 200
    \end{pmatrix}, \quad \mathrm{and}\quad 
    \mathbb{C}_{1} = 
    \begin{pmatrix}
        1   & 0.1 & 0\\
        0.1 & 1   & 0\\
        0   & 0   & 2
    \end{pmatrix},
\end{equation*}
where the elasticity tensors are given in Voigt notation. First, we test with different values for the interfacial tension $\gamma = 1, 5, 10, 50, 100$, and then for different values of the swelling parameter $\xi = 0.1, 0.5, 1, 1.5, 2$.  Simulation results for different values of $\gamma$ are plotted in Figure~\ref{fig:midsplitsg0}--\ref{fig:midsplitsg1000} ($\gamma = 5$), Figure~\ref{fig:midsplitmg0}--\ref{fig:midsplitmg1000} ($\gamma = 10$), and Figure~\ref{fig:midsplitlg0}--\ref{fig:midsplitlg1000} ($\gamma = 100)$. Moreover, in Figure~\ref{fig:energy-midsplit} we see that the energy decays over time, using both the semi-implicit time discretization \eqref{eq:weakch1}--\eqref{eq:weakelastic}, and the implicit one \eqref{eq:weakimpch1}--\eqref{eq:weakimpelastic}, for different time-step sizes, $\gamma = 5$ and $\xi = 1$.

\begin{table}[ht]
\centering
\begin{tabular}{c|c|c|c}
Parameter name & Symbol & Value & Unit\\
\hline
Chemical mobility & $m$ & 1 &$\left[\frac{L^4}{FT}\right]$ \\
Interfacial tension & $\gamma$ & -- & $\left[F\right]$ \\
Time step size & $\tau$ & 1e-5 & $\left[T\right]$\\
Final time & $T$ & 0.01 & $\left[T\right]$\\
Swelling parameter & $\xi$ & --
&[--] \\
Mesh diameter & $h$ & $\frac{\sqrt{2}}{65}$& $\left[L\right]$ \\
Regularization parameter & $\ell$ & $0.02$ & [--] \\
Elasticity tensors & $\mathbb{C}_{-1}, \mathbb{C}_{1}$ & - & $\left[\frac{F}{L^2}\right]$
\end{tabular}
\caption{Table of simulation parameters. Here, $L$ denotes the unit of length, $F$ force, and $T$ time.}
\label{tab:1}
\end{table}

\begin{figure}
\centering
\begin{minipage}{0.5\textwidth}
    \begin{subfigure}{0.24\textwidth}
        \includegraphics[width = \textwidth]{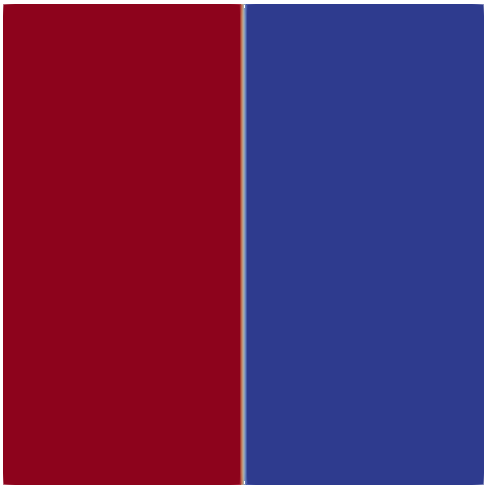}
        \caption{$t = 0$}
        \label{fig:midsplitsg0}
    \end{subfigure}
    \hfill
    \begin{subfigure}{0.24\textwidth}
        \includegraphics[width = \textwidth]{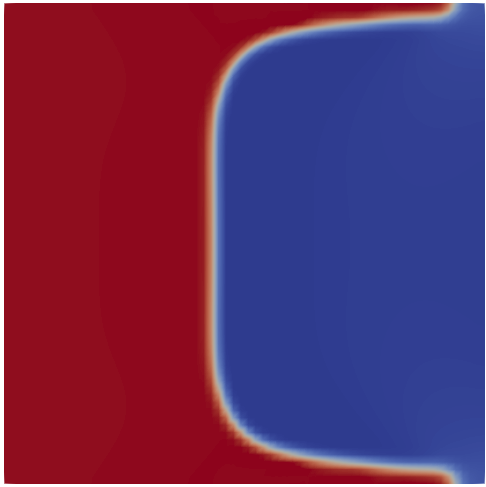}
        \caption{$t = 0.001$}
        \label{fig:midsplitsg100}
    \end{subfigure}
    \hfill
    \begin{subfigure}{0.24\textwidth}
        \includegraphics[width = \textwidth]{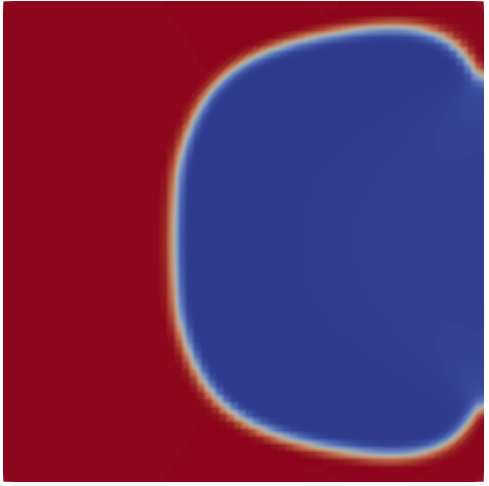}
        \caption{$t = 0.005$}
        \label{fig:midsplitsg500}
    \end{subfigure}
    \hfill
    \begin{subfigure}{0.24\textwidth}
        \includegraphics[width = \textwidth]{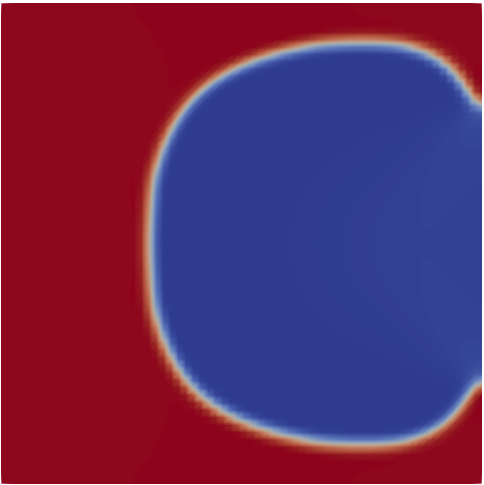}
        \caption{$t = 0.01$}
        \label{fig:midsplitsg1000}
    \end{subfigure}
    \hfill
    \begin{subfigure}{0.24\textwidth}
        \includegraphics[width = \textwidth]{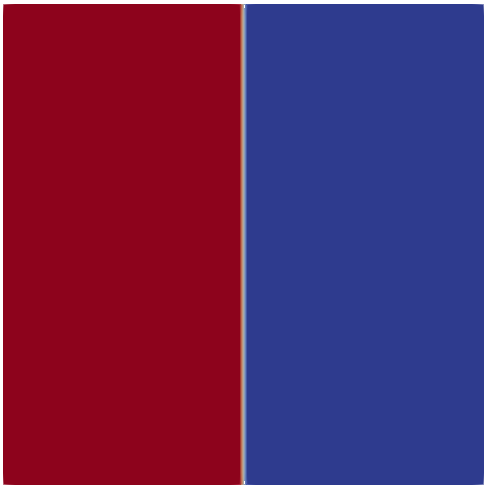}
        \caption{$t = 0$}
        \label{fig:midsplitmg0}
    \end{subfigure}
    \hfill
    \begin{subfigure}{0.24\textwidth}
        \includegraphics[width = \textwidth]{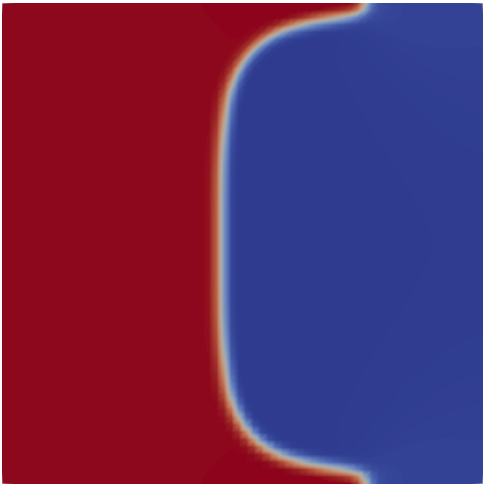}
        \caption{$t = 0.001$}
        \label{fig:midsplitmg100}
    \end{subfigure}
    \hfill
    \begin{subfigure}{0.24\textwidth}
        \includegraphics[width = \textwidth]{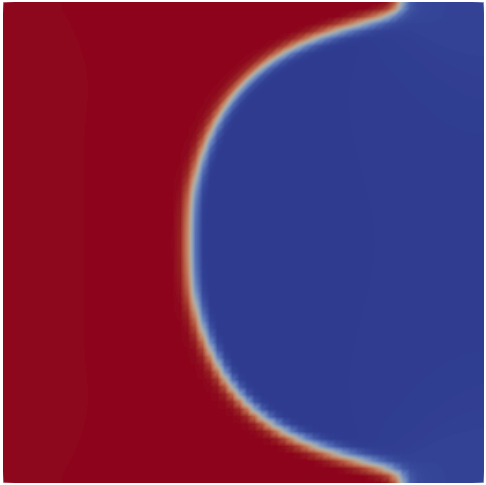}
        \caption{$t = 0.005$}
        \label{fig:midsplitmg500}
    \end{subfigure}
    \hfill
    \begin{subfigure}{0.24\textwidth}
        \includegraphics[width = \textwidth]{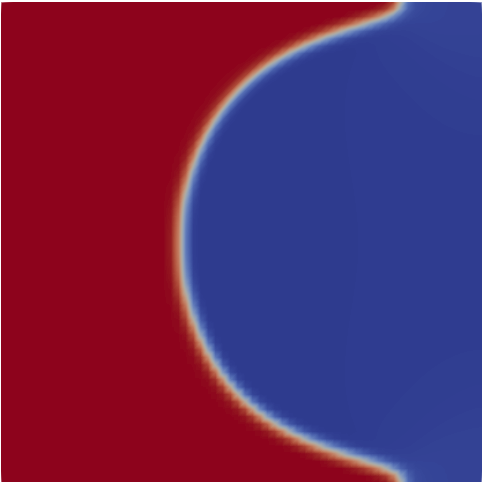}
        \caption{$t = 0.01$}
        \label{fig:midsplitmg1000}
    \end{subfigure}
    \begin{subfigure}{0.24\textwidth}
        \includegraphics[width = \textwidth]{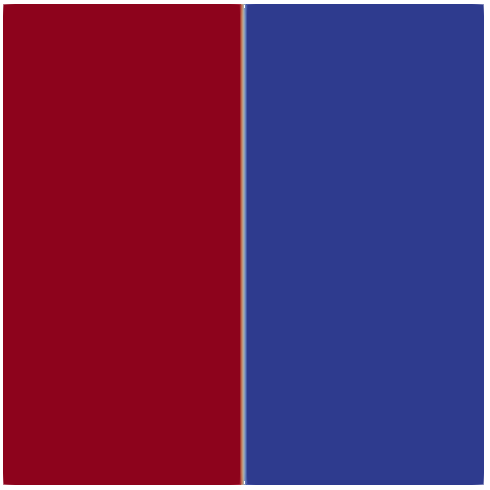}
        \caption{$t = 0$}
        \label{fig:midsplitlg0}
    \end{subfigure}
    \hfill
    \begin{subfigure}{0.24\textwidth}
        \includegraphics[width = \textwidth]{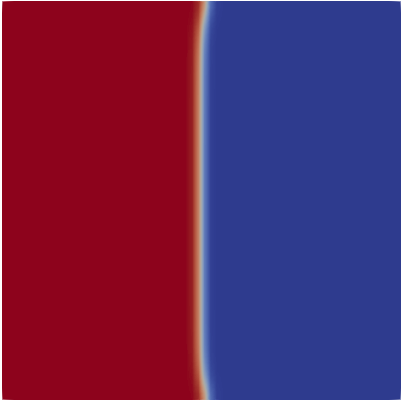}
        \caption{$t = 0.001$}
        \label{fig:midsplitlg100}
    \end{subfigure}
    \hfill
    \begin{subfigure}{0.24\textwidth}
        \includegraphics[width = \textwidth]{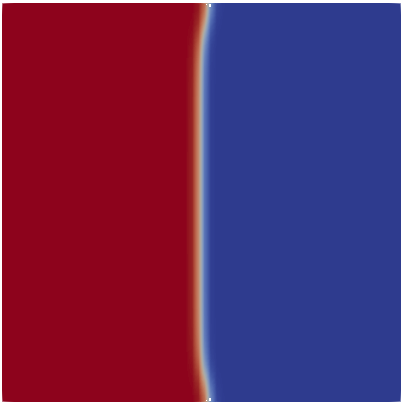}
        \caption{$t = 0.005$}
        \label{fig:midsplitlg500}
    \end{subfigure}
    \hfill
    \begin{subfigure}{0.24\textwidth}
        \includegraphics[width = \textwidth]{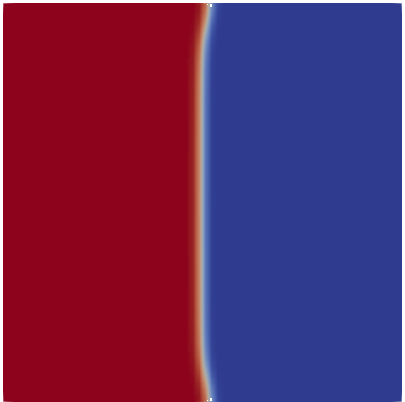}
        \caption{$t = 0.01$}
        \label{fig:midsplitlg1000}
    \end{subfigure}
    \hspace{0cm}\includegraphics[width=\textwidth]{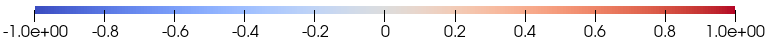}
    \end{minipage}
    \begin{minipage}{0.45\textwidth}
    \begin{subfigure}{\textwidth}
        \input{energy-midsplit.tex}
        \caption{Energy decay over time for different time-step sizes, $\gamma = 5$ and $\xi = 1$.}
        \label{fig:energy-midsplit}
    \end{subfigure}
    \end{minipage}
       \caption{(a) -- (l): the solution at time $t$ for the phase-field $\pf$. (a) -- (d):  $\gamma = 5$, (e) -- (h): $\gamma = 10$, (i) -- (l): $\gamma = 100$. (m): Total energy \eqref{eq:freeenergy} for both the implicit (in the elastic energy) time discretization \eqref{eq:weakimpch1}--\eqref{eq:weakimpelastic} and the semi-implicit one \eqref{eq:weakch1}--\eqref{eq:weakelastic} with different time step sizes and $\gamma = 10$.}
\end{figure}

\subsubsection{Dependence on interfacial tension}\label{sec:midsplitgamma}
We run several simulations with different values for the interfacial tension $\gamma= 1, 5, 10, 50, 100$, while counting the number of iterations the different solution strategies take to achieve satisfactory precision. The other parameters are found in Table~\ref{tab:1}, and the swelling parameter is chosen to be $\xi = 1$.

\pgfplotstableread{
0 0
1 0
2 0
}\datatableEmpty

\pgfplotstableread{
1 0
5 3.354
10 3.265
50 3.036
100 2.481
}\datatableGammaMono

\pgfplotstableread{
1 4.782
5 3.186
10 3.125
50 3.027
100 2.483
}\datatableGammaMonoSemi

\pgfplotstableread{
1 19.787
5 9.412
10 9.552
50 6.966
100 5.280
}\datatableGammaSplit

\pgfplotstableread{
1 11.714
5 6.222
10 5.585
50 5.124
100 4.002
}\datatableGammaSplitSemi

\begin{figure}
  \begin{tikzpicture}
   \pgfplotsset{ybar stacked, ymin=2.500, ymax=20.000, width=\textwidth, enlargelimits=0.1,
   grid=both, grid style={line width=.2pt, draw=gray!20}, 
   xmin=0, xmax = 4
 }
 
 \begin{axis}
 [
 bar width=5pt,
 bar shift=-17.5pt, 
 ylabel={Avg. \# iter. pr. time step}, 
 xticklabels={1, 5, 10, 50, 100},
 xlabel = {Interfacial tension - $\gamma$},
 ytick={3.000, 5.000, 7.000, 9.000, 20.000}, 
 height=5cm,
 xtick=data,
 bar width=5pt,
 bar shift=-7.5pt, 
 height=5cm,
 scaled y ticks = false
 ]  
 
 \pgfplotsinvokeforeach {1}{
     \addplot[color=neworange1, fill=neworange1] table [x expr=\coordindex, y index=#1] {\datatableGammaMono};}
     
\end{axis} 

 \begin{axis}
 [
 bar width=5pt,
 bar shift=-2.5pt, 
 xtick=\empty,
 ytick=\empty, 
 height=5cm
 ]  
 
 \pgfplotsinvokeforeach {1}{
     \addplot[color=newblue1, fill=newblue1] table [x expr=\coordindex, y index=#1] {\datatableGammaMonoSemi};}
     
\end{axis} 

 \begin{axis}
 [
 bar width=5pt,
 bar shift=2.5pt, 
 xtick=\empty,
 ytick=\empty, 
 height=5cm
 ]  
 
 \pgfplotsinvokeforeach {1}{
     \addplot[color=newgreen2, fill=newgreen2] table [x expr=\coordindex, y index=#1] {\datatableGammaSplit};}
     
\end{axis}

\begin{axis}
 [
 bar width=5pt,
 bar shift=7.5pt, 
 xtick=\empty,
 ytick=\empty, 
 height=5cm
 ]  
 
 \pgfplotsinvokeforeach {1}{
     \addplot[color=newgreen1, fill=newgreen1] table [x expr=\coordindex, y index=#1] {\datatableGammaSplitSemi};}
     
\end{axis} 

 \begin{axis}
 [
 bar width=0pt,
 bar shift=10pt, 
 legend columns=2,
 legend style={at={(0.75,1.05)}, anchor=north}, 
 xtick=\empty,
 legend cell align=left,
 legend style={
     /tikz/column 2/.style={
         column sep=-1.3pt,
         row sep=-20pt 
     },
     /tikz/column 6/.style={ 
         column sep=-1.3pt,
         row sep=-20pt 
     }},
 ytick=\empty,
 height=5cm
 ]  
 \pgfplotsinvokeforeach {1}{
     \addplot[color=orange, fill=orange] table [x expr=\coordindex, y index=#1] {\datatableEmpty};}
 \pgfplotsinvokeforeach {1}{
     \addplot[color=newblue1, fill=newblue1] table [x expr=\coordindex, y index=#1] {\datatableEmpty};}
\pgfplotsinvokeforeach {1}{
     \addplot[color=newgreen2, fill=newgreen2] table [x expr=\coordindex, y index=#1] {\datatableEmpty};}
\pgfplotsinvokeforeach {1}{
     \addplot[color=newgreen1, fill=newgreen1] table [x expr=\coordindex, y index=#1] {\datatableEmpty};}

 \legend{
 Imp. Mono, 
 Semi-Imp. Mono., 
 Imp. Split., 
 Semi-Imp. Split.
 }
 \end{axis}

 \end{tikzpicture}
 
 \caption{Test case with phases separated along the middle: Total number of iterations for different values of the interfacial tension parameter $\gamma$. Here, "Imp." refers to the discrete system of equation \eqref{eq:weakimpch1}--\eqref{eq:weakimpelastic}, whereas "Semi.-Imp." corresponds to the discrete system of equations \eqref{eq:weakch1}--\eqref{eq:weakelastic}. Moreover, "Mono." refers to the monolithic full Newton method applied to the discrete system of equations and the alternating minimization algorithm is labeled with "Split.". The numerical scheme \eqref{eq:splitweakch1}--\eqref{eq:splitweakelasticity} corresponds to "Semi-Imp. Split.". Notice that "Imp. Mono." failed to converge for $\gamma = 1$ and, therefore, it is not marked above that value in the plot.}
 \label{fig:gammaiterations}
\end{figure}

In Figure~\ref{fig:gammaiterations}, we see that the monolithic Newton method converges in fewer iterations than the alternating minimization algorithms. However, for the smallest value of interfacial tension, $\gamma=1$, (when the coupling strength is highest) the monolithic Newton method with implicit-in-time evaluation of the elastic energy \eqref{eq:weakimpch1}--\eqref{eq:weakimpelastic} does not converge at all, and is therefore not a robust choice as a solution strategy. The monolithic Newton method applied to the semi-implicitly discretized system of equations \eqref{eq:weakch1}--\eqref{eq:weakelastic} seems to be a robust choice of linearization procedure, which is due to the convex nature of the related minimization problem, see Proposition~\ref{prop:minimization}. Moreover, as expected from Corollary~\ref{cor:am}, the number of iterations for the alternating minimization method \eqref{eq:splitweakch1}--\eqref{eq:splitweakelasticity} decreases with increasing interfacial tension. This is in fact true for all of the solution strategies as the relative coupling strength between Cahn-Hilliard and elasticity is decreasing for increasing interfacial tension.

\subsubsection{Dependence on swelling parameter}
A similar test is considered for several values of the swelling parameter, $\xi = 0.01, 0.1, 0.5, 1, 1.5, 2$ and a fixed interfacial tension $\gamma = 5$, see Figure~\ref{fig:xiiterations}. Here, we observe, as is expected from the theory, Corollary~\ref{cor:am}, that the coupled problems become more difficult to solve (require more iterations of either the Newton method or alternating minimization) when the swelling parameter increases. This is natural as the swelling parameter is directly connected to the coupling strength between the phase-field and elasticity equations. Another important observation is that for large values of the swelling parameter ($\xi = 1.5$ and $\xi = 2$) the monolithic Newton method applied to the discrete system of equations \eqref{eq:weakimpch1}--\eqref{eq:weakimpelastic} does not converge at all. On the other hand, alternating minimization converges for these cases as well, which (although we have no theoretical proof for it) suggests that the alternating minimization method is more robust than the Newton method for this problem. Notice also that for the smallest value of swelling parameter $\xi = 0.01$, the problem is almost decoupled, and convergence of the linearization/decoupling methods is reached in approximately one iteration (in some time-steps two iterations are required).
\pgfplotstableread{
0 0
1 0
2 0
}\datatableEmpty

\pgfplotstableread{
0.01 1.227
0.1 1.411
0.5 3.233
1 3.354
1.5 0
2 0
}\datatableXiMono

\pgfplotstableread{
0.01 1.227
0.1 1.441
0.5 3.061
1 3.186
1.5 4.141
2 4.410
}\datatableXiMonoSemi

\pgfplotstableread{
0.01 1.268
0.1 1.850
0.5 9.193
1 9.412
1.5 14.884
2 16.902
}\datatableXiSplit

\pgfplotstableread{
0.01 1.265
0.1 1.630
0.5 5.001
1 6.222
1.5 8.717
2 11.279
}\datatableXiSplitSemi

\begin{figure}
  \begin{tikzpicture}
   \pgfplotsset{ybar stacked, ymin=1.000, ymax=17.000, width=\textwidth, enlargelimits=0.05,
   grid=both, grid style={line width=.2pt, draw=gray!20}, 
   xmin=0, xmax = 5
 }
 
 \begin{axis}
 [
 bar width=5pt,
 bar shift=-17.5pt, 
 ylabel={Avg. \# iter. pr. time step}, 
 xticklabels={0.01, 0.1, 0.5, 1, 1.5, 2},
 xlabel = {Swelling parameter - $\xi$},
 ytick={1.250, 3.000, 5.000, 9.000, 16.000}, 
 height=5cm,
 xtick=data,
 bar width=5pt,
 bar shift=-7.5pt, 
 height=5cm,
 scaled y ticks = false
 ]  
 
 \pgfplotsinvokeforeach {1}{
     \addplot[color=neworange1, fill=neworange1] table [x expr=\coordindex, y index=#1] {\datatableXiMono};}
     
\end{axis} 

 \begin{axis}
 [
 bar width=5pt,
 bar shift=-2.5pt, 
 xtick=\empty,
 ytick=\empty, 
 height=5cm
 ]  
 
 \pgfplotsinvokeforeach {1}{
     \addplot[color=newblue1, fill=newblue1] table [x expr=\coordindex, y index=#1] {\datatableXiMonoSemi};}
     
\end{axis} 

 \begin{axis}
 [
 bar width=5pt,
 bar shift=2.5pt, 
 xtick=\empty,
 ytick=\empty, 
 height=5cm
 ]  
 
 \pgfplotsinvokeforeach {1}{
     \addplot[color=newgreen2, fill=newgreen2] table [x expr=\coordindex, y index=#1] {\datatableXiSplit};}
     
\end{axis}

\begin{axis}
 [
 bar width=5pt,
 bar shift=7.5pt, 
 xtick=\empty,
 ytick=\empty, 
 height=5cm
 ]  
 
 \pgfplotsinvokeforeach {1}{
     \addplot[color=newgreen1, fill=newgreen1] table [x expr=\coordindex, y index=#1] {\datatableXiSplitSemi};}
     
\end{axis} 

 \begin{axis}
 [
 bar width=0pt,
 bar shift=10pt, 
 legend columns=2,
 legend style={at={(0.25,1.05)}, anchor=north}, 
 xtick=\empty,
 legend cell align=left,
 legend style={
     /tikz/column 2/.style={
         column sep=-1.3pt,
         row sep=-20pt 
     },
     /tikz/column 6/.style={ 
         column sep=-1.3pt,
         row sep=-20pt 
     }},
 ytick=\empty,
 height=5cm
 ]  
 \pgfplotsinvokeforeach {1}{
     \addplot[color=orange, fill=orange] table [x expr=\coordindex, y index=#1] {\datatableEmpty};}
 \pgfplotsinvokeforeach {1}{
     \addplot[color=newblue1, fill=newblue1] table [x expr=\coordindex, y index=#1] {\datatableEmpty};}
\pgfplotsinvokeforeach {1}{
     \addplot[color=newgreen2, fill=newgreen2] table [x expr=\coordindex, y index=#1] {\datatableEmpty};}
\pgfplotsinvokeforeach {1}{
     \addplot[color=newgreen1, fill=newgreen1] table [x expr=\coordindex, y index=#1] {\datatableEmpty};}

 \legend{
 Imp. Mono, 
 Semi-Imp. Mono., 
 Imp. Split., 
 Semi-Imp. Split.
 }
 \end{axis}

 \end{tikzpicture}
 
 \caption{Test case with phases separated along the middle: Total number of iterations for different values of the swelling parameter $\xi$. Here, "Imp." refers to the discrete system of equation \eqref{eq:weakimpch1}--\eqref{eq:weakimpelastic}, whereas "Semi.-Imp." corresponds to the discrete system of equations \eqref{eq:weakch1}--\eqref{eq:weakelastic}.  Moreover, "Mono." refers to the monolithic full Newton method applied to the discrete system of equations and the alternating minimization algorithm is labeled with "Split.". The numerical scheme \eqref{eq:splitweakch1}--\eqref{eq:splitweakelasticity} corresponds to "Semi-Imp. Split.". Notice that "Imp. Mono." failed to converge for $\xi = 1.5$ and $\xi = 2$ and, therefore, it is not marked above those values in the plot.}
 \label{fig:xiiterations}
\end{figure}

\subsubsection{Anderson acceleration applied to the decoupling algorithms}
As mentioned in the introduction, the Anderson acceleration \cite{anderson1965iterative} has been successfully applied to accelerate decoupling/splitting schemes, as alternating minimization previously \cite{storvik2021accelerated, both2019anderson}, or linearly convergence methods like the Picard algorithm for Navier-Stokes \cite{pollock2019anderson}. The scheme is applied as a post-process to fixed-point iterations and updates the current iterate as a linear combination of the $m$ (called depth of the acceleration) previous iterates. More careful explanation of the method can be found in e.g., \cite{storvik2021accelerated, both2019anderson, pollock2019anderson}.

Here we applied the Anderson acceleration to accelerate the alternating minimization method \eqref{eq:splitweakch1}--\eqref{eq:splitweakelasticity} ("Semi-Imp. Split."), and the staggered scheme applied to \eqref{eq:weakimpch1}--\eqref{eq:weakimpelastic} ("Imp. Split."). Simulation parameters from Table~\ref{tab:1} with $\gamma = 1$ and $\xi = 1$ are used, similar to the first column in Figure~\ref{fig:gammaiterations}, and we test for acceleration depths ranging from $m= 0$ (no acceleration) to $m= 5$. The results are displayed in Figure~\ref{fig:midsplitAAiterations}. We observe that for the staggered scheme applied to \eqref{eq:weakimpch1}--\eqref{eq:weakimpelastic} ("Imp. Split."), the postprocessing accelerates the convergence quite significantly, however, it fails to converge for the largest depth ($m=5$). For the  the alternating minimization method \eqref{eq:splitweakch1}--\eqref{eq:splitweakelasticity} ("Semi-Imp. Split."), it only accelerates slightly, and actually decelerates the convergence for larger values of depths ($m= 4,5$). Therefore, using the Anderson acceleration to solve the alternating minimization problem might be beneficial for smaller depths. Moreover, there are several ways of improving the convergence of the Anderson acceleration, e.g, periodically restart it from depth $m=0$ or turn it on and off using some safeguard mechanics (see \cite{storvik2021accelerated}), but this is outside the scope of the current paper to investigate.
\pgfplotstableread{
0 0
1 0
2 0
}\datatableEmpty

\pgfplotstableread{
0 19.787
1 13.703
2 13.108
3 13.324
4 13.730
5 0
}\datatableAAMidsplitImp

\pgfplotstableread{
0 11.714
1 10.018
2 9.651
3 11.115
4 12.231
5 13.477
}\datatableAAMidsplitSemiImp

\begin{figure}
  \begin{tikzpicture}
   \pgfplotsset{ybar stacked, ymin=6.500, ymax=27.000, width=\textwidth, enlargelimits=0.1,
   grid=both, grid style={line width=.2pt, draw=gray!20}, 
   xmin=0, xmax = 5
 }
 
 \begin{axis}
 [
 bar width=5pt,
 bar shift=-2.5pt, 
 ylabel={Avg. \# iter. pr. time step}, 
 xticklabels={0, 1, 2, 3, 4, 5},
 xlabel = {Anderson acceleration depth},
 ytick={10.000, 13.000, 20.000}, 
 height=5cm,
 xtick=data,
 height=5cm,
 scaled y ticks = false
 ]  
 
 \pgfplotsinvokeforeach {1}{
     \addplot[color=newgreen2, fill=newgreen2] table [x expr=\coordindex, y index=#1] {\datatableAAMidsplitImp};}
     
\end{axis} 

 \begin{axis}
 [
 bar width=5pt,
 bar shift=2.5pt, 
 xtick=\empty,
 ytick=\empty, 
 height=5cm
 ]  
 
 \pgfplotsinvokeforeach {1}{
     \addplot[color=newgreen1, fill=newgreen1] table [x expr=\coordindex, y index=#1] {\datatableAAMidsplitSemiImp};}
     
\end{axis} 

 \begin{axis}
 [
 bar width=0pt,
 bar shift=10pt, 
 legend columns=2,
 legend style={at={(0.75,1.05)}, anchor=north}, 
 xtick=\empty,
 legend cell align=left,
 legend style={
     /tikz/column 2/.style={
         column sep=-1.3pt,
         row sep=-20pt 
     },
     /tikz/column 6/.style={ 
         column sep=-1.3pt,
         row sep=-20pt 
     }},
 ytick=\empty,
 height=5cm
 ]  
 \pgfplotsinvokeforeach {1}{
     \addplot[color=newgreen2, fill=newgreen2] table [x expr=\coordindex, y index=#1] {\datatableEmpty};}
 \pgfplotsinvokeforeach {1}{
     \addplot[color=newgreen1, fill=newgreen1] table [x expr=\coordindex, y index=#1] {\datatableEmpty};}
 \legend{
 Imp. Split, 
 Semi-Imp. Split.
 }
 \end{axis}

 \end{tikzpicture}
 
 \caption{Test case with phases segregated in the middle: Total number of iterations for different Anderson acceleration depths. Here, "Imp." refers to the discrete system of equation \eqref{eq:weakimpch1}--\eqref{eq:weakimpelastic}, whereas "Semi.-Imp." corresponds to the discrete system of equations \eqref{eq:weakch1}--\eqref{eq:weakelastic}. Notice that "Semi-Imp. Split." failed to converge for depth $5$ and, therefore, it is not marked above that value in the plot.}
 \label{fig:midsplitAAiterations}
\end{figure}

\subsection{Random initial conditions: Spinodal decomposition}
We provide another numerical experiment here, with randomized initial conditions, where the initial "mixture" decomposes into pure phases and we observe a coarsening effect that resembles spinodal decomposition. This effect has been studied for the Cahn-Larch\'e equations previously in e.g., \cite{graser2014numerical, garcke2001cahn}. In Figure~\ref{fig:randomfig} we present simulation results using parameters from Table~\ref{tab:1}, $\xi = 1$ and $\gamma = 5,$ $\gamma = 10,$ and $\gamma = 100$. In Figure~\ref{fig:energy-random}, we plot the total energy \eqref{eq:freeenergy} of the system for both the discrete system of equations \eqref{eq:weakimpch1}--\eqref{eq:weakimpelastic} ("Imp.\!") and \eqref{eq:weakch1}--\eqref{eq:weakelastic} ("Semi-Imp.\!") for different values of the interfacial tension parameter. We observe that there is close to no difference between the free energy over the simulation for the two time-discretizations and that both of them are decreasing over time.

In Figure~\ref{fig:randomgammaiterations}, the total number of iterations for the different solution strategies are presented for different values of the interfacial tension $\gamma = 1, 5, 10, 50, 100$. We see that, as in Section~\ref{sec:midsplitgamma}, the number of decoupling/linearization iterations decrease for increasing values of the interfacial tension, exactly as the theory for alternating minimization predicts, Corollary~\ref{cor:am}. Again the Newton method outperforms the alternating minimization method in terms of numbers of iterations, although the difference shrinks significantly for lower relative coupling strengths ($\gamma$ increasing). Moreover, we stress that the alternating minimization method has the added benefit of allowing for the use of readily available implementations and solvers for Cahn-Hilliard and elasticity with only small modifications.

\begin{figure}
\centering
\begin{minipage}{0.5\textwidth}
    \begin{subfigure}{0.24\textwidth}
        \includegraphics[width = \textwidth]{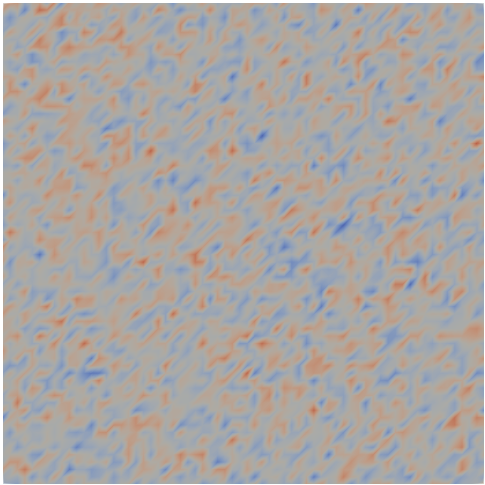}
        \caption{$t = 0$}
        \label{fig:randomg0}
    \end{subfigure}
    \hfill
    \begin{subfigure}{0.24\textwidth}
        \includegraphics[width = \textwidth]{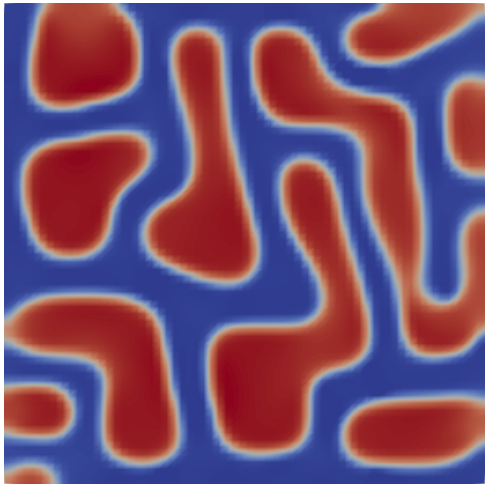}
        \caption{$t = 0.0004$}
        \label{fig:randomg4}
    \end{subfigure}
    \hfill
    \begin{subfigure}{0.24\textwidth}
        \includegraphics[width = \textwidth]{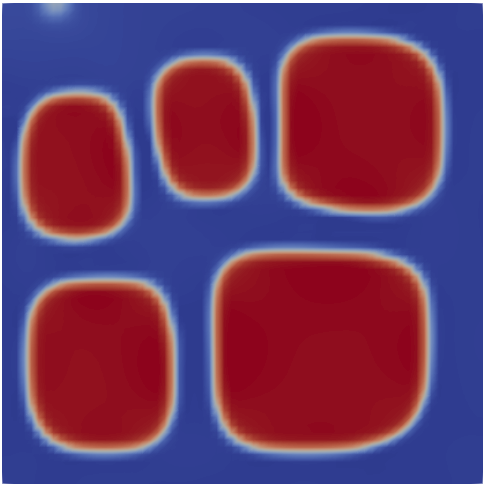}
        \caption{$t = 0.01$}
        \label{fig:randomg50}
    \end{subfigure}
    \hfill
    \begin{subfigure}{0.24\textwidth}
        \includegraphics[width = \textwidth]{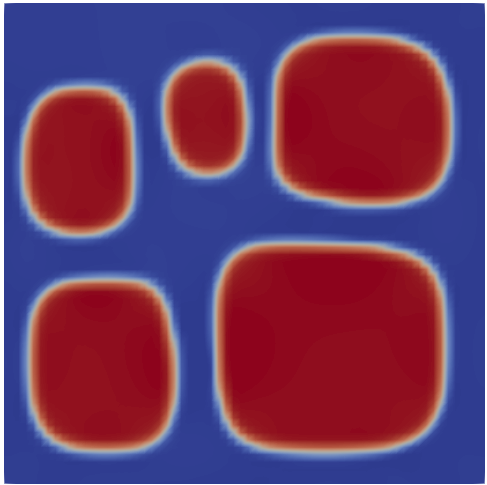}
        \caption{$t = 0.02$}
        \label{fig:randomg1000}
    \end{subfigure}
    \hfill
    \begin{subfigure}{0.24\textwidth}
        \includegraphics[width = \textwidth]{figs/random-0-gammaone.png}
        \caption{$t = 0$}
        \label{fig:randomteng0}
    \end{subfigure}
    \hfill
    \begin{subfigure}{0.24\textwidth}
        \includegraphics[width = \textwidth]{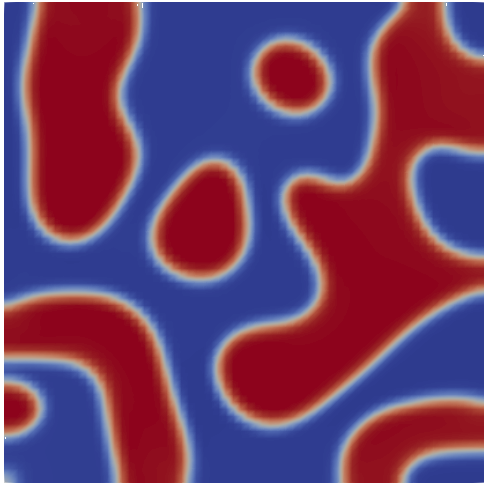}
        \caption{$t = 0.0004$}
        \label{fig:randomgten4}
    \end{subfigure}
    \hfill
    \begin{subfigure}{0.24\textwidth}
        \includegraphics[width = \textwidth]{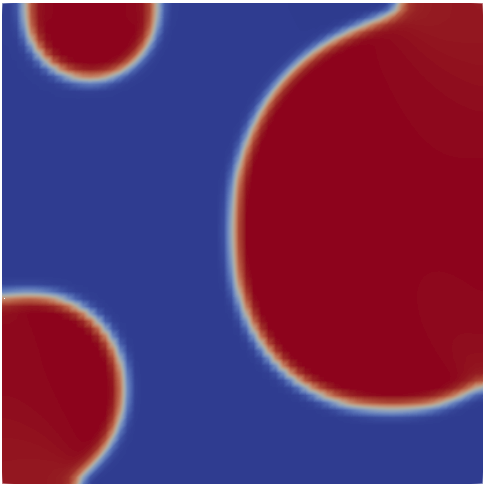}
        \caption{$t = 0.01$}
        \label{fig:randomteng50}
    \end{subfigure}
    \hfill
    \begin{subfigure}{0.24\textwidth}
        \includegraphics[width = \textwidth]{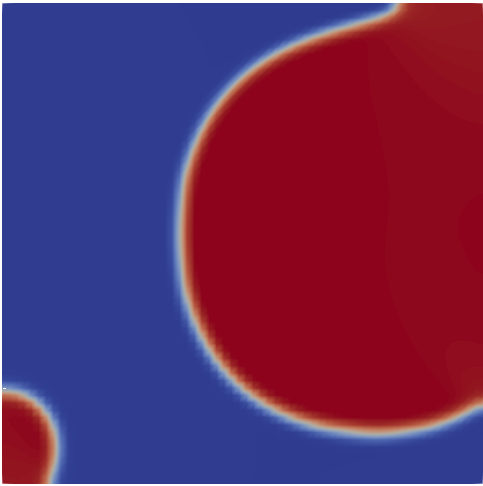}
        \caption{$t = 0.02$}
        \label{fig:randomteng1000}
    \end{subfigure}
    \begin{subfigure}{0.24\textwidth}
        \includegraphics[width = \textwidth]{figs/random-0-gammaone.png}
        \caption{$t = 0$}
        \label{fig:randomhundredg0}
    \end{subfigure}
    \hfill
    \begin{subfigure}{0.24\textwidth}
        \includegraphics[width = \textwidth]{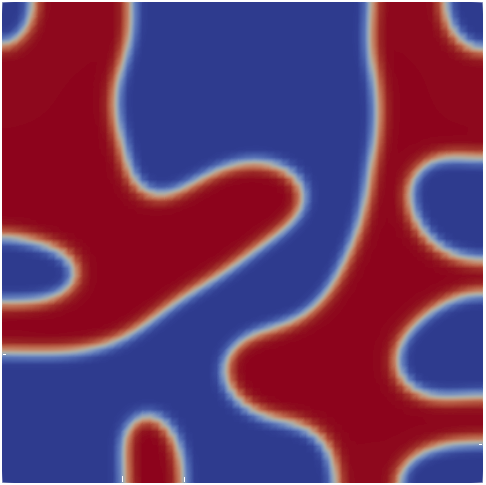}
        \caption{$t = 0.0004$}
        \label{fig:randomhundredg4}
    \end{subfigure}
    \hfill
    \begin{subfigure}{0.24\textwidth}
        \includegraphics[width = \textwidth]{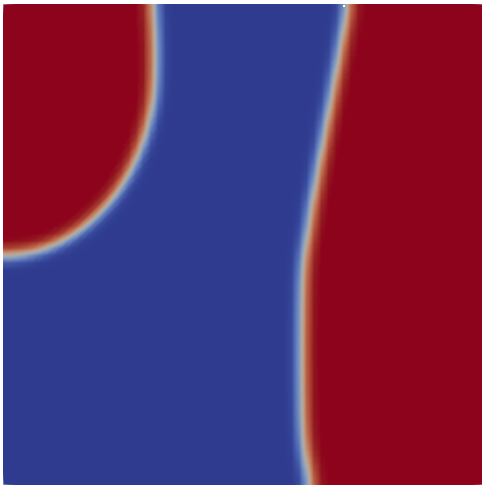}
        \caption{$t = 0.01$}
        \label{fig:randomhundredg50}
    \end{subfigure}
    \hfill
    \begin{subfigure}{0.24\textwidth}
        \includegraphics[width = \textwidth]{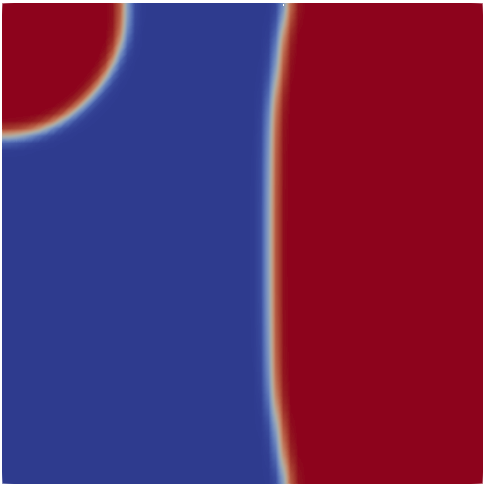}
        \caption{$t = 0.02$}
        \label{fig:randomhundredg1000}
    \end{subfigure}
    \hspace{0cm}\includegraphics[width=\textwidth]{figs/colorbarhorizontal.png}
    \end{minipage}
    \begin{minipage}{0.45\textwidth}
    \begin{subfigure}{\textwidth}
        \input{energy-random.tex}
        \caption{Energy \eqref{eq:freeenergy} decay over time for both the discrete method \eqref{eq:weakimpch1}--\eqref{eq:weakimpelastic} ("Imp.\!"), and \eqref{eq:weakch1}--\eqref{eq:weakelastic} ("Semi-Imp.\!") for different values of interfacial tension parameter. Notice the logarithmic scale of the y-axis.}
        \label{fig:energy-random}
    \end{subfigure}
    \end{minipage}
       \caption{(a) -- (l): the solution at time $t$ for the phase-field $\pf$. (a) -- (d):  $\gamma = 5$, (e) -- (h): $\gamma = 10$, (i) -- (l): $\gamma = 100$. (m): Total energy \eqref{eq:freeenergy} for both the implicit (in the elastic energy) time discretization \eqref{eq:weakimpch1}--\eqref{eq:weakimpelastic} and the semi-implicit one \eqref{eq:weakch1}--\eqref{eq:weakelastic} with different time step sizes and $\gamma = 10$.}
       \label{fig:randomfig}
\end{figure}

\pgfplotstableread{
0 0
1 0
2 0
}\datatableEmpty





\pgfplotstableread{
1 3.688
5 3.896
10 3.893
50 3.586
100 3.298
}\datatableRandomGammaMono

\pgfplotstableread{
1 4.016
5 3.293
10 3.363
50 3.521
100 3.292
}\datatableRandomGammaMonoSemi

\pgfplotstableread{
1 13.447
5 8.487
10 8.431
50 5.746
100 5.004
}\datatableRandomGammaSplit

\pgfplotstableread{
1 11.776
5 6.447
10 5.790
50 4.426
100 4.082
}\datatableRandomGammaSplitSemi

\begin{figure}
  \begin{tikzpicture}
   \pgfplotsset{ybar stacked, ymin=3.250, ymax=13.000, width=\textwidth, enlargelimits=0.1,
   grid=both, grid style={line width=.2pt, draw=gray!20}, 
   xmin=0, xmax = 4
 }
 
 \begin{axis}
 [
 bar width=5pt,
 bar shift=-17.5pt, 
 ylabel={Avg. \# iter. pr. time step}, 
 xticklabels={1, 5, 10, 50, 100},
 xlabel = {Interfacial tension - $\gamma$},
 ytick={3.000, 5.000, 7.000, 9.000, 20.000}, 
 height=5cm,
 xtick=data,
 bar width=5pt,
 bar shift=-7.5pt, 
 height=5cm,
 scaled y ticks = false
 ]  
 
 \pgfplotsinvokeforeach {1}{
     \addplot[color=neworange1, fill=neworange1] table [x expr=\coordindex, y index=#1] {\datatableRandomGammaMono};}
     
\end{axis} 

 \begin{axis}
 [
 bar width=5pt,
 bar shift=-2.5pt, 
 xtick=\empty,
 ytick=\empty, 
 height=5cm
 ]  
 
 \pgfplotsinvokeforeach {1}{
     \addplot[color=newblue1, fill=newblue1] table [x expr=\coordindex, y index=#1] {\datatableRandomGammaMonoSemi};}
     
\end{axis} 

 \begin{axis}
 [
 bar width=5pt,
 bar shift=2.5pt, 
 xtick=\empty,
 ytick=\empty, 
 height=5cm
 ]  
 
 \pgfplotsinvokeforeach {1}{
     \addplot[color=newgreen2, fill=newgreen2] table [x expr=\coordindex, y index=#1] {\datatableRandomGammaSplit};}
     
\end{axis}

\begin{axis}
 [
 bar width=5pt,
 bar shift=7.5pt, 
 xtick=\empty,
 ytick=\empty, 
 height=5cm
 ]  
 
 \pgfplotsinvokeforeach {1}{
     \addplot[color=newgreen1, fill=newgreen1] table [x expr=\coordindex, y index=#1] {\datatableRandomGammaSplitSemi};}
     
\end{axis} 

 \begin{axis}
 [
 bar width=0pt,
 bar shift=10pt, 
 legend columns=2,
 legend style={at={(0.75,1.05)}, anchor=north}, 
 xtick=\empty,
 legend cell align=left,
 legend style={
     /tikz/column 2/.style={
         column sep=-1.3pt,
         row sep=-20pt 
     },
     /tikz/column 6/.style={ 
         column sep=-1.3pt,
         row sep=-20pt 
     }},
 ytick=\empty,
 height=5cm
 ]  
 \pgfplotsinvokeforeach {1}{
     \addplot[color=orange, fill=orange] table [x expr=\coordindex, y index=#1] {\datatableEmpty};}
 \pgfplotsinvokeforeach {1}{
     \addplot[color=newblue1, fill=newblue1] table [x expr=\coordindex, y index=#1] {\datatableEmpty};}
\pgfplotsinvokeforeach {1}{
     \addplot[color=newgreen2, fill=newgreen2] table [x expr=\coordindex, y index=#1] {\datatableEmpty};}
\pgfplotsinvokeforeach {1}{
     \addplot[color=newgreen1, fill=newgreen1] table [x expr=\coordindex, y index=#1] {\datatableEmpty};}

 \legend{
 Imp. Mono, 
 Semi-Imp. Mono., 
 Imp. Split., 
 Semi-Imp. Split.
 }
 \end{axis}

 \end{tikzpicture}
 
 \caption{Test case with random initial data: Total number of iterations for different values of the interfacial tension parameter $\gamma$. Here, "Imp." refers to the discrete system of equation \eqref{eq:weakimpch1}--\eqref{eq:weakimpelastic}, whereas "Semi.-Imp." corresponds to the discrete system of equations \eqref{eq:weakch1}--\eqref{eq:weakelastic}. Moreover, "Mono." refers to the monolithic full Newton method applied to the discrete system of equations and the alternating minimization algorithm is labeled with "Split.". The numerical scheme \eqref{eq:splitweakch1}--\eqref{eq:splitweakelasticity} corresponds to "Semi-Imp. Split.".}
 \label{fig:randomgammaiterations}
\end{figure}

\section{Conclusions}\label{sec:conclusion}
In this paper, we proposed a semi-implicit time discretization to the Cahn-Larch\'e equations and showed that it is equivalent to a convex minimization problem. Then convergence of alternating minimization applied to this problem was proved, and several numerical experiments to study its convergence properties in comparison to the monolithic Newton method were provided. Additionally, the alternating minimization (splitting method) and the monolithic Newton method applied to the newly proposed semi-implicit time-discretization were compared numerically to the same iterative methods applied to a more standard choice of time-discretization with implicit-in-time evaluations of the elastic contributions and a convex-concave split of the double-well potential. We observed that the convergence properties of the iterative methods (Newton's method and alternating minimization) applied to the newly proposed time-discretization are superior to those that are applied to the standard discretization, and in several cases we get convergence of the Newton method for the former and not for the latter. Moreover, for the special case of phase-field-independent elasticity tensor we proved that the discretization is unconditionally gradient stable, by exploiting its minimization structure. For the phase-field dependent elasticity tensor, numerical experiments show that the free energy of the system decreases over time. The newly proposed time-discretization is shown to be well suited for iterative solution schemes and provides a needed alternative to the standard implicit methods.

\section*{Acknowledgments}
The work has been partly supported by the Centre for Sustainable Subsurface Resources, funded by the Norwegian Research council, as well as the FracFlow project funded by Equinor, Norway through Akademiaavtalen.

\bibliographystyle{unsrt}
\bibliography{bibliography.bib}

\end{document}